\newtheorem{theorem}{Theorem}[section]
\newtheorem{corollary}[theorem]{Corollary}
\newtheorem{lemma}[theorem]{Lemma}
\newtheorem{proposition}[theorem]{Proposition}
\theoremstyle{remark}
\newtheorem{definition}[theorem]{Definition}
\newtheorem{example}[theorem]{Example}
\newtheorem{remark}[theorem]{Remark}
\newtheorem{question}[theorem]{Question}
\numberwithin{equation}{section}
\newcommand{\C}{\mathbb{C}}
\newcommand{\DD}{\mathbb{D}}
\newcommand{\R}{\mathbb{R}}
\newcommand{\T}{\mathbb{T}}
\newcommand{\abs}[1]{\lvert#1\rvert}
\newcommand{\norm}[1]{\lVert#1\rVert}
\DeclareMathOperator{\cp}{cap}
\DeclareMathOperator{\dist}{dist}
\DeclareMathOperator{\im}{Im}
\DeclareMathOperator{\diam}{diam}
\DeclareMathOperator{\sgn}{sign}
\title{Symmetrization and extension of planar bi-Lipschitz maps}
\author{Leonid V. Kovalev}
\address{215 Carnegie, Mathematics Department, Syracuse University, Syracuse, NY 13244, USA}
\email{lvkovale@syr.edu}
\thanks{Supported by the National Science Foundation grant DMS-1362453.}
\subjclass[2010]{Primary 26B35; Secondary 30C35, 31A15}
\keywords{Bi-Lipschitz extension, conformal map, harmonic measure}
\begin{document}

\begin{abstract} We show that every centrally symmetric bi-Lipschitz embedding of the circle into the plane
can be extended to a global bi-Lipschitz map of the plane with linear bounds on the distortion. This answers a 
question of Daneri and Pratelli in the special case of centrally symmetric maps. For general bi-Lipschitz embeddings our distortion bound has a combination of linear and cubic growth, which improves on the prior results. The proof involves a symmetrization result for bi-Lipschitz maps which may be of independent interest. 
\end{abstract}

\maketitle

\section{Introduction}

A map $f\colon X\to Y$ between subsets of Euclidean spaces is called a bi-Lipschitz embedding if there exist positive constants $L$ and $\ell$ such that 
\begin{equation}\label{BLdef}
\ell\abs{a-b}\le \abs{f(a)-f(b)}\le L\abs{a-b}, \quad a,b\in X.
\end{equation}
To emphasize the role of constants, $f$ may be called  $(L, \ell)$-bi-Lipschitz. The lower bound $\ell$ is often taken to be $1/L$ in the literature but for our purpose keeping track of two  constants separately is more natural.  

The Lipschitz Schoenflies theorem was proved by Tukia in~\cite{Tu, Tu2} (see also~\cite{JK,Lat2}). It asserts that every bi-Lipschitz embedding $f\colon \T\to\C$, where $\T$ is the unit circle, can be extended to a global bi-Lipschitz map $F$. In its original form this theorem was not quantitative in that it did not provide Lipschitz constants $(L', \ell')$ for the extension.      

Daneri and Pratelli~\cite{DP} obtained a quantitative bi-Lipschitz extension theorem: an $(L, 1/L)$-bi-Lipschitz embedding $f\colon \T\to\C$ has a $(CL^4, 1/(CL^4))$-bi-Lipschitz extension with a universal constant $C$. They asked whether linear control of the distortion constants is possible, which is a natural question considering that linear distortion bounds are standard for Lipschitz extension problems~\cite{BB}. The following theorem provides such a result for the upper Lipschitz constant $L'$, while the lower constant $\ell'$ is cubic (which is still an improvement on the aforementioned $4$th degree estimate). 

\begin{theorem}\label{mainthmgen} Every $(L,\ell)$-bi-Lipschitz embedding $f\colon \T\to \C$ can be extended to an $(L', \ell')$-bi-Lipschitz automorphism $F\colon \C\to\C$  with $L'= 10^{28} L$ and $\ell' = 10^{-25} \ell^2/L$. 
\end{theorem}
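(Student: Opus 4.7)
The plan is to reduce Theorem~\ref{mainthmgen} to the centrally symmetric extension result already established earlier in the paper, which (per the abstract) delivers linear distortion bounds. The bridge between the two is a symmetrization procedure applied to the boundary map $f$, which is the ``symmetrization result of independent interest'' advertised in the abstract.

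First I would construct a factorization $f = \varphi \circ g$ in which $g \colon \T \to \C$ is a centrally symmetric bi-Lipschitz embedding, meaning $g(-z) = -g(z)$, and $\varphi \colon g(\T) \to f(\T)$ is a bi-Lipschitz bijection. One should aim for $g$ to have bi-Lipschitz constants of the same order as $(L,\ell)$, while paying the asymmetry of $f$ in the form of a factor $\ell/L$ in the lower constant of $\varphi$ (its upper constant being $O(L/\ell)$ or better). With such a factorization in hand, I would apply the centrally symmetric extension theorem to $g$, producing a global bi-Lipschitz automorphism $G$ of $\C$ with $G|_\T = g$ and linear distortion.

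Next, I would extend $\varphi$ to a bi-Lipschitz automorphism $\Phi$ of $\C$. Because $g(\T)$ is already a symmetric bi-Lipschitz curve with controlled constants, this extension step should be easier than the original problem: one can appeal to an extension theorem applied on this already tame curve, or build $\Phi$ by a direct interpolation using the geometry of $\varphi$. Composing, $F = \Phi \circ G$ extends $f$, and the constants multiply to give an upper Lipschitz bound linear in $L$ and a lower bound of order $\ell^2/L$, matching the statement. The explicit numerical constants $10^{28}$ and $10^{-25}$ are then a bookkeeping consequence of the constants in the symmetric extension theorem and in the extension of $\Phi$.

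The main obstacle is the symmetrization itself. The naive odd part $z \mapsto \tfrac12(f(z)-f(-z))$ is centrally symmetric and $L$-Lipschitz from above, but it can collapse---any even component of $f$ is annihilated---so it generally fails the lower bi-Lipschitz bound. A genuinely nonlinear, geometric symmetrization is therefore required, for example one that reflects the image of one arc of $\T$ through a well-chosen center and then reparametrizes $\T$ to glue the two halves into a bi-Lipschitz curve. Choosing the center of symmetry and the reparametrization so that both $g$ and the resulting correction $\varphi$ have controlled bi-Lipschitz constants is the delicate part; the conformal map and harmonic measure tools foreshadowed by the paper's keywords presumably enter here, the harmonic measure providing the natural reparametrization of $\T$ against which the reflected curve is matched. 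Bounding the bi-Lipschitz constants of the factors, and ensuring that $\varphi$ admits a benign extension for the second step, is where the exponents $L^1$ and $\ell^2/L$ in the final estimate are decided.
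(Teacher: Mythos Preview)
Your high-level plan---reduce to the centrally symmetric case via a symmetrization of $f$---is exactly right, but the specific mechanism you propose has a genuine gap, and it is not the one the paper uses.

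You propose a factorization $f=\varphi\circ g$ with $g$ centrally symmetric, followed by two separate extensions: first $g$ to $G$ via the symmetric theorem, then $\varphi$ to a global bi-Lipschitz map $\Phi$. The second step is where the proposal breaks down. Extending a bi-Lipschitz bijection $\varphi\colon g(\T)\to f(\T)$ between two Jordan curves to a bi-Lipschitz automorphism of $\C$ is essentially the same problem you started with (indeed, up to pre- and post-composition with $G$ and a candidate extension of $f$, it \emph{is} the same problem). Saying that $g(\T)$ is ``already tame'' does not help: the Daneri--Pratelli quartic bound is the only off-the-shelf result available for that step, and invoking it would destroy the exponents you are aiming for. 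You have effectively hidden the original difficulty inside the construction of $\Phi$.

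The paper avoids this entirely by using a \emph{conjugation} rather than a \emph{factorization}. The symmetrization is carried out by the winding map $W(re^{i\theta})=re^{2i\theta}$: after translating $f$ so that the image curve encloses $0$ with inradius $\ge\ell$, one defines $g$ by the relation $f\circ W=W\circ g$ on $\T$. This $g$ is automatically centrally symmetric, and an elementary argument (no harmonic measure, no conformal maps) shows it is $(\pi L,\,\ell^2/(2\pi L))$-bi-Lipschitz; this is where the $\ell^2/L$ appears. One then extends $g$ to a centrally symmetric $G\colon\C\to\C$ by the symmetric theorem. Because $G$ is centrally symmetric, the same relation $F\circ W=W\circ G$ defines a global map $F$ on $\C$, and $F$ (or $-F$) extends $f$ with only a factor of $2$ lost in the derivative bounds. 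No auxiliary map $\varphi$ is ever extended.

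A secondary point: the harmonic measure and conformal map machinery in the paper is used exclusively in the proof of the centrally symmetric extension theorem, not in the symmetrization step. The symmetrization is purely the winding-map construction plus an inradius estimate.
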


If the distortion is measured by the ratio of upper and lower Lipschitz constants, then Theorem~\ref{mainthmgen} provides a quadratic bound, namely $L'/\ell' \le 10^{53} (L/\ell)^2$.

A map $f$ is \textit{centrally symmetric} if $f(-z) = -f(z)$ for all $z$ in the domain of $f$. For such maps we can settle the problem completely. 

\begin{theorem}\label{mainthm} A centrally symmetric $(L,\ell)$-bi-Lipschitz embedding $f\colon \T\to \C$ can be extended to a centrally symmetric $(L', \ell')$-bi-Lipschitz automorphism $F\colon \C\to\C$  with $L'= 10^{27} L$ and $\ell' = 10^{-23} \ell$. 
\end{theorem}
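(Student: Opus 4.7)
\emph{Proof plan.} My strategy is to run the construction underlying Theorem~\ref{mainthmgen} while preserving central symmetry at every step; this should both streamline the argument and eliminate the $\ell/L$ loss in the lower Lipschitz bound.

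Central symmetry of $f$ forces $\Gamma=f(\T)$ to be a Jordan curve invariant under $z\mapsto -z$, so the bounded component $\Omega$ of $\C\setminus\Gamma$ contains the origin and the unbounded component $\Omega^\ast$ is also symmetric. Let $\phi\colon\DD\to\Omega$ and $\psi\colon\C\setminus\overline\DD\to\Omega^\ast$ be the Riemann maps normalized by $\phi(0)=0$, $\phi'(0)>0$, $\psi(\infty)=\infty$, $\psi'(\infty)>0$. Applying the Riemann mapping theorem to $z\mapsto -\phi(-z)$ and $z\mapsto -\psi(-z)$ and invoking uniqueness, both $\phi$ and $\psi$ are odd functions of $z$.

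Extending $\phi,\psi$ continuously to $\T$ via Carath\'eodory, the boundary correspondences $h_{\mathrm{int}}=\phi^{-1}\circ f$ and $h_{\mathrm{ext}}=\psi^{-1}\circ f$ are odd orientation-preserving homeomorphisms of $\T$. The harmonic measure estimates used in the proof of Theorem~\ref{mainthmgen} show that each is bi-Lipschitz. Central symmetry is crucial here: the invariance of $\Omega$ under $z\mapsto -z$ balances harmonic measure at antipodal arcs, which replaces the mixed $\ell^2/L$ dependence by a direct linear dependence on $L$ and $\ell$. I then extend each odd self-map of $\T$ to an odd bi-Lipschitz map of $\overline\DD$ (resp.\ $\overline{\C\setminus\DD}$) using the same extension machinery as in the proof of Theorem~\ref{mainthmgen}, applied on the upper half-disk (resp.\ upper half-exterior) and then reflected through the origin; the fixed point at $0$ (resp.\ $\infty$) ensures continuity across the diameter.

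Composing with $\phi$ and $\psi$, and gluing along $\T$, gives a centrally symmetric bi-Lipschitz automorphism $F\colon\C\to\C$ extending $f$. The main obstacle is the harmonic measure step: one must show that odd symmetry allows the distortion of the weldings $h_{\mathrm{int}}$ and $h_{\mathrm{ext}}$ to be estimated with linear dependence on $L$ and $\ell$ rather than with the mixed bound of the general case. Once that is established, the remaining bookkeeping of constants proceeds in parallel with the proof of Theorem~\ref{mainthmgen} and yields $L'\le 10^{27}L$ and $\ell'\ge 10^{-23}\ell$.
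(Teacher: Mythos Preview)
Your proposal contains a genuine gap at the harmonic measure step. The claim that the welding homeomorphisms $h_{\mathrm{int}}=\phi^{-1}\circ f$ and $h_{\mathrm{ext}}=\psi^{-1}\circ f$ are bi-Lipschitz is false in general: for a Jordan domain bounded by a bi-Lipschitz (equivalently, chord-arc) curve the conformal boundary correspondence is only quasisymmetric, not bi-Lipschitz, and central symmetry does not repair this. Correspondingly, the conformal maps $\phi$ and $\psi$ themselves are not bi-Lipschitz up to the boundary, so even if you could extend $h_{\mathrm{int}}$ to a bi-Lipschitz self-map of $\overline{\DD}$, composing with $\phi$ would not yield a bi-Lipschitz map.

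The paper avoids this obstacle by never separating the two factors. The Beurling-Ahlfors extension $\Psi$ of $\psi=f^{-1}\circ\phi$ has $\norm{D\Psi(z)}$ controlled by $\dist(\sigma_1,\sigma_4)/\log(1/r)$, where $\sigma_j=\psi(\gamma_j)$ are the $\psi$-images of four reference arcs near $z$ (Lemma~\ref{homeodist}). Simultaneously, Lemma~\ref{intconf} bounds $\abs{\Phi'(z)}$ from below by $\dist(\Gamma_1,\Gamma_4)/\log(1/r)$, where $\Gamma_j=\phi(\gamma_j)$ are the $\phi$-images of the \emph{same} arcs. Since $\Gamma_j=f(\sigma_j)$, the bi-Lipschitz property of $f$ gives $\dist(\sigma_1,\sigma_4)\le\ell^{-1}\dist(\Gamma_1,\Gamma_4)$, and the two $\log(1/r)$ factors cancel, yielding $\norm{D\Psi(z)}\le C\ell^{-1}\abs{\Phi'(z)}$. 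This is what makes $F=\Phi\circ\Psi^{-1}$ bi-Lipschitz even though neither factor is.

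Central symmetry enters not through any ``balancing of harmonic measure at antipodal arcs'' but in two concrete places in Lemma~\ref{homeodist}: it forces $\psi$ to send semicircles to semicircles, so that the Euclidean distance $\dist(\sigma_1,\sigma_4)$ is comparable to the arclength distance that the Beurling-Ahlfors formula actually controls; and it gives the lifted map the periodicity $\chi(t+\pi)=\chi(t)+\pi$, which handles the regime $\abs{z}\le e^{-\pi/4}$ far from $\T$. Your half-disk-plus-reflection scheme for the extension is also problematic, since an odd circle homeomorphism need not fix any real point and hence need not preserve the real diameter.
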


Recent papers on quantitative bi-Lipschitz extension include~\cite{ATV, ATV2, AV, AS, Ka, Ko, Mac, T}. In particular, Alestalo and V\"ais\"al\"a~\cite{AV} observed that the bi-Lipschitz form of the Klee trick incurs quadratic distortion growth; it is unclear whether this can be made linear. In~\cite{Ko} the author proved that bi-Lipschitz extension with linear distortion bounds is possible for maps $f\colon \R\to\C$. It should be noted that while conjugation by a M\"obius map appears to reduce the extension problem for $\T$ to the same problem for $\R$, this is not so when linear distortion bounds are desired. Conjugating, extending, and conjugating back yields nonlinear bounds such as $CL^9$. 

The paper is structured as follows. 
Section~\ref{harmestsec} relates harmonic measure to metric properties of sets, following the Beurling-Nevanlinna theorem. Section~\ref{confJordansec} uses harmonic measure to estimate the derivative of a  conformal map that will be used in the extension process. In \S\ref{BAextsec} we study the properties of an extension of a circle homeomorphism obtained by the Beurling-Ahlfors method~\cite{BA}. It would be interesting to employ the conformally natural Douady-Earle extension~\cite{DE} instead, but its nonlocal nature presents an obstacle. 

In \S\ref{disksec} the aforementioned results are combined to produce an extension of a centrally symmetric  embedding of $\T$ to a bi-Lipschitz map of the unit disk $\DD$. The exterior domain $\C\setminus \overline{\DD}$ requires a separate treatment; as noted above, M\"obius conjugation is not an option for us. 
The required estimates for harmonic measure and conformal maps of $\C\setminus \overline{\DD}$  are  obtained in~\S\ref{extsec}, and this allows the proof of Theorem~\ref{mainthm} to be completed in~\S\ref{globalsec}. To derive Theorem~\ref{mainthmgen} from Theorem~\ref{mainthm}, we develop a symmetrization process for bi-Lipschitz maps in~\S\ref{symmsec}. The paper concludes with~\S\ref{concludesec} presenting the proof of Theorem~\ref{mainthmgen} and some open questions.  

Throughout the paper, $D(a,r)$ is the open disk of radius $r$ with center $a$. As a special case, $\DD = D(0,1)$ is the unit disk, and $\T = \partial \DD$ is the unit circle. 

\section{Harmonic measure estimates}\label{harmestsec}

Our starting point is a classical harmonic measure estimate~\cite[Corollary 4.5.9]{Ranb} which is a consequence of
the Beurling-Nevanlinna projection theorem.

\begin{proposition}\label{BNprop} Let $\Omega\subset \C\setminus \{0\}$ be a simply connected domain. Pick a point
$\zeta\in\Omega$ and let $\rho>0$.

(a) If $\abs{\zeta}<\rho$, then
\begin{equation}\label{BN1}
\omega(\zeta,\partial \Omega\cap D(0,\rho), \Omega)\ge \frac{2}{\pi}\sin^{-1}\left(\frac{\rho-\abs{\zeta}}{\rho+\abs{\zeta}}\right).
\end{equation}

(b) If $\abs{\zeta}>\rho$, then
\begin{equation}\label{BN2}
\omega(\zeta,\partial \Omega\cap \overline{D}(0,\rho), \Omega)\le \frac{2}{\pi}\cos^{-1}\left(\frac{\abs{\zeta}-\rho}{\abs{\zeta}+\rho}\right).
\end{equation}
\end{proposition}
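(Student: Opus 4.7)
The plan is to invoke the Beurling-Nevanlinna projection theorem to reduce each inequality to an explicit harmonic-measure computation in a radial-slit extremal domain, and then to carry out that computation by composing a few elementary conformal maps.

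For part (a), Beurling-Nevanlinna projection (applied to the complement of $\Omega$) produces the extremal configuration: among all simply connected $\Omega\subset\C\setminus\{0\}$ containing a point $\zeta$ with $|\zeta|=r<\rho$, the harmonic measure $\omega(\zeta,\partial\Omega\cap D(0,\rho),\Omega)$ is minimized by the slit disk $\Omega_0=D(0,\rho)\setminus[0,\rho]$ with $\zeta$ placed at the diametrically opposite point $-r$. To compute this extremal harmonic measure, I would compose in order the scaling $z\mapsto z/\rho$, a branch of $\sqrt{\cdot}$ that unfolds $\DD\setminus[0,1]$ onto the upper half-disk, the Cayley map $w\mapsto i(1-w)/(1+w)$ from the upper half-disk onto the first quadrant, and finally the squaring $w\mapsto w^2$ onto the upper half-plane; under this chain the slit $[0,\rho]$ lands on the negative real axis, while $\zeta=-r$ lands in the interior of the first quadrant before squaring. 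The harmonic measure of a boundary ray in the upper half-plane equals the subtended angle divided by $\pi$, and reading off this angle gives $\frac{2}{\pi}\arctan\frac{1-s}{2\sqrt s}$ with $s=r/\rho$. The right-triangle identity $\arctan\frac{1-s}{2\sqrt s}=\sin^{-1}\frac{1-s}{1+s}$ (legs $1-s$ and $2\sqrt s$, hypotenuse $1+s$) rewrites this as \eqref{BN1}.

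Part (b) I would derive from part (a) via the Möbius inversion $\phi(z)=\rho^2/z$. Since $\phi$ is a self-homeomorphism of $\hat\C$ that exchanges $D(0,\rho)$ with its exterior and sends $\zeta$ to a point $\phi(\zeta)$ with $|\phi(\zeta)|=\rho^2/|\zeta|<\rho$, the image $\tilde\Omega=\phi(\Omega)$ again satisfies the hypotheses of the proposition. Applying \eqref{BN1} to $\tilde\Omega$ at $\phi(\zeta)$ gives a lower bound on $\omega(\zeta,\partial\Omega\setminus\overline D(0,\rho),\Omega)$; subtracting from $1$ and using $\sin^{-1}x+\cos^{-1}x=\pi/2$ then yields \eqref{BN2}. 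The main step requiring care is the Beurling-Nevanlinna reduction itself: verifying that the radial-slit configuration really extremizes the harmonic measure, and that the projection inequality forces $\zeta$ to be placed opposite the slit. After that, the conformal chain and the trigonometric simplifications are mechanical.
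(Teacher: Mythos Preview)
The paper does not prove this proposition: it is quoted as a classical fact with a citation to Ransford's book (Corollary~4.5.9 there), where it is derived from the Beurling--Nevanlinna projection theorem. So there is no ``paper's own proof'' to compare against; you have supplied an argument where the paper simply cites one.

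That said, your outline is correct and is essentially the standard derivation. For part~(a), the circular projection theorem does identify the slit disk $D(0,\rho)\setminus[0,\rho]$ with $\zeta=-r$ as the extremal configuration, and your conformal chain (scale, square root to the upper half-disk, Cayley to the first quadrant, square to the upper half-plane) produces exactly the value $\tfrac{2}{\pi}\arctan\tfrac{1-s}{2\sqrt s}$ with $s=r/\rho$; the right-triangle identity then gives~\eqref{BN1}. For part~(b), the inversion $\phi(z)=\rho^2/z$ does send $\Omega$ to another simply connected domain in $\C\setminus\{0\}$ (note $0\notin\Omega$ ensures $\infty\notin\phi(\Omega)$, and $\infty\notin\Omega$ ensures $0\notin\phi(\Omega)$), swaps the open disk with its exterior, and preserves harmonic measure; applying~\eqref{BN1} to the image and complementing gives~\eqref{BN2} after the identity $\sin^{-1}x+\cos^{-1}x=\pi/2$. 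Your caveat is apt: the only nontrivial ingredient is the projection inequality itself, which you are invoking rather than proving.
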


We need the following corollary of Proposition~\ref{BNprop}.

\begin{corollary}\label{BNcor} Let $\Omega\subset \C$ be a simply connected domain. Consider a point $\zeta \in \Omega$ and a subset $\Gamma\subset \partial \Omega$. Suppose that $\omega(\zeta,\Gamma,\Omega)\ge \epsilon>0$. Then 
\begin{equation}\label{BNcor1}
\dist(\zeta,\Gamma) \le \csc^2 \left(\frac{\pi \epsilon}{4}\right) \dist(\zeta,\partial\Omega);
\end{equation}
\begin{equation}\label{BNcor2}
\diam \Gamma \ge \tan^2 \left(\frac{\pi \epsilon}{4} \right) \dist(\zeta,\Gamma).
\end{equation}
\end{corollary}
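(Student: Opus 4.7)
The plan is to apply the two parts of Proposition~\ref{BNprop} after translating the origin to a well-chosen boundary point. Since $\omega(\zeta,\cdot,\Omega)$ is a probability measure, $\epsilon\in(0,1]$; consequently $\csc^2(\pi\epsilon/4)\ge 2$ and $\tan^2(\pi\epsilon/4)\le 1$, which lets me dispose of degenerate configurations at once.

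For~\eqref{BNcor1}, set $d=\dist(\zeta,\partial\Omega)$ and $r=\dist(\zeta,\Gamma)$. If $r\le 2d$ the bound is immediate from $\csc^2(\pi\epsilon/4)\ge 2$, so I may assume $r>2d$. I pick $z_0\in\partial\Omega$ with $|\zeta-z_0|=d$, translate so that $z_0=0$, and note that $\Omega\subset\C\setminus\{0\}$. For every $w\in\Gamma$ the triangle inequality gives $|w|\ge|w-\zeta|-|\zeta|\ge r-d$, so $\Gamma\cap D(0,r-d)=\emptyset$, and therefore
\[
\omega(\zeta,\partial\Omega\cap D(0,r-d),\Omega)\le \omega(\zeta,\partial\Omega\setminus\Gamma,\Omega)\le 1-\epsilon.
\]
Since $r-d>d=|\zeta|$, Proposition~\ref{BNprop}(a) with $\rho=r-d$ yields
\[
\frac{2}{\pi}\sin^{-1}\!\left(\frac{r-2d}{r}\right)\le 1-\epsilon,
\]
equivalently $1-2d/r\le\sin(\pi(1-\epsilon)/2)=\cos(\pi\epsilon/2)$. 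The half-angle identity $1-\cos\theta=2\sin^2(\theta/2)$ then rearranges this to $r\le d\csc^2(\pi\epsilon/4)$.

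For~\eqref{BNcor2}, set $D=\diam\Gamma$. If $D\ge r$ the bound follows from $\tan^2(\pi\epsilon/4)\le 1$; otherwise $D<r$, and in particular $\Gamma$ is bounded, so one may select $z_1\in\overline{\Gamma}\subset\partial\Omega$ with $|\zeta-z_1|=r$. I translate so that $z_1=0$; then $|\zeta|=r$ and $\Gamma\subset\overline{D}(0,D)$, so Proposition~\ref{BNprop}(b) with $\rho=D$ gives $\epsilon\le(2/\pi)\cos^{-1}((r-D)/(r+D))$, and the identity $(1-\cos\theta)/(1+\cos\theta)=\tan^2(\theta/2)$ rearranges this to $D\ge r\tan^2(\pi\epsilon/4)$.

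The main point to notice, and the reason the two estimates are proved a bit asymmetrically, is that Proposition~\ref{BNprop}(a) supplies only a \emph{lower} bound on harmonic measure. To deduce~\eqref{BNcor1} I therefore apply it to the complement $\partial\Omega\setminus\Gamma$ (whose harmonic measure is at most $1-\epsilon$) rather than to $\Gamma$ itself; once that complementation trick is in place, both inequalities reduce to a direct application of the proposition followed by routine trigonometric manipulation.
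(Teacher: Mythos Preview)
Your proof is correct and follows essentially the same route as the paper's: translate a nearest boundary point (respectively, a point of $\Gamma$) to the origin, dispose of the easy case by the trivial bounds $\csc^2(\pi\epsilon/4)\ge 2$ and $\tan^2(\pi\epsilon/4)\le 1$, and then apply Proposition~\ref{BNprop}(a) to the complement of $\Gamma$ (respectively, (b) to $\Gamma$) and rearrange via half-angle identities. The only cosmetic difference is that for~\eqref{BNcor2} you place the origin at a \emph{nearest} point of $\overline{\Gamma}$, giving $|\zeta|=\dist(\zeta,\Gamma)$ exactly, whereas the paper uses an arbitrary point of $\Gamma$ and the inequality $|\zeta|\ge\dist(\zeta,\Gamma)$; both work.
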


\begin{proof} To prove~\eqref{BNcor1}, translate $\Omega$ so that $0$ is a nearest boundary point to $\zeta$, that is $0\in\partial\Omega$ and $\dist(z,\Omega) = \abs{\zeta}$. Let $\rho = \dist(\zeta,\Gamma) - \abs{\zeta}$. If $\rho \le \abs{\zeta}$, then $\dist(\zeta,\Gamma) \le 2 \dist(z,\Omega)$ and so ~\eqref{BNcor1} holds. Otherwise,~\eqref{BN1} implies 
\begin{equation}\label{bnc1a}
\frac{2}{\pi}\sin^{-1}\left(\frac{\rho-\abs{\zeta}}{\rho+\abs{\zeta}}\right)
\le \omega(\zeta,\partial \Omega\cap D(0,\rho), \Omega) \le 1-\epsilon 
\end{equation}
where the second inequality holds because $D(0,\rho)$ is disjoint from $\Gamma$. 
Rearranging~\eqref{bnc1a} yields $\rho/\abs{\zeta} \le \cot^2(\pi \epsilon/4)$, which implies ~\eqref{BNcor1}.

To prove~\eqref{BNcor2}, translate $\Omega$ so that $0\in \Gamma$. Let $\rho=\diam\Gamma$ and note that $\Gamma\subset \overline{D}(0,\rho)$. If $\abs{\zeta} \le \rho$, then $\dist(\zeta,\Gamma)\le \diam \Gamma$ and so~\eqref{BNcor2} holds. Otherwise, by~\eqref{BN2},
\begin{equation*}
\frac{2}{\pi}\cos^{-1}\left(\frac{\abs{\zeta}-\rho}{\abs{\zeta}+\rho}\right)
\ge \omega(\zeta,\partial \Omega\cap D(0,\rho), \Omega) \ge \epsilon 
\end{equation*}
which implies $\abs{\zeta}/\rho \le  \cot^2(\pi \epsilon/4)$, i.e., \eqref{BNcor2}. 
\end{proof}

Given a point $z = re^{i\theta}$ with $e^{-2\pi}<r<1$, let $\delta = \log (1/r)$ and introduce four arcs of the unit circle $\T$: 
\begin{equation}\label{gammacurves}
\begin{split}
&\gamma_1 = \{e^{it}\colon \theta-2\delta \le  t\le \theta - \delta \} \\
&\gamma_2 = \{e^{it}\colon \theta-\delta \le  t\le \theta - \delta/2 \} \\
&\gamma_3 = \{e^{it}\colon \theta+\delta/2 \le  t\le \theta + \delta \} \\
&\gamma_4 = \{e^{it}\colon \theta+\delta \le  t\le \theta + 2\delta\}
\end{split}
\end{equation}

Since the length of each arc is comparable to its distance from $z$, one expects its harmonic measure with respect to $z$ to be bounded below by a positive constant. The following lemma makes this explicit. The constraints on $\abs{z}$ in~\eqref{lowerharm1} and ~\eqref{lowerharm2} are imposed so that the arcs involved are contained in a semicircle, which will be important later. 

\begin{lemma}\label{lowerharm} Using notation~\eqref{gammacurves} for $j=1,\dots,4$, we have
\begin{equation}\label{lowerharm1}
\omega(z, \gamma_j, \DD) \ge \frac{1}{30\pi} \quad \text{if $j=1,4$ and $e^{-\pi/4}<|z|<1$}
\end{equation}
and
\begin{equation}\label{lowerharm2}
\omega(z, \gamma_j, \DD) \ge \frac{1}{64\pi} \quad \text{if $j=2,3$ and $e^{-2\pi}<|z|<1$}
\end{equation}
\end{lemma}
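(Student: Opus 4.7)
The plan is to estimate $\omega(z,\gamma_j,\DD)$ directly from the Poisson integral representation. Writing $\abs{z}=r=e^{-\delta}$ and substituting $s=t-\theta$,
\[
\omega(z,\gamma_j,\DD) = \frac{1}{2\pi}\int_{I_j}\frac{1-r^2}{(1-r)^2+4r\sin^2(s/2)}\,ds,
\]
where $I_j$ is an interval of length $\delta$ (for $j=1,4$) or $\delta/2$ (for $j=2,3$) contained in $\{s:\abs{s}\in[\delta/2,2\delta]\}$. Since arclength is comparable to distance from $z$ on each $\gamma_j$, the Poisson kernel is of order $1/\delta$ on $I_j$, so the integral ought to be bounded below by a universal constant independent of $\delta$.

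For $\gamma_1$ and $\gamma_4$ the hypothesis $e^{-\pi/4}<\abs{z}<1$ forces $\delta\in(0,\pi/4)$, whence $\sin(s/2)\le s/2\le\delta$ on $I_j$ and the denominator is at most $(1-r)^2+4r\delta^2\le 5\delta^2$. The numerator $1-r^2=(1-r)(1+r)$ is at least a definite multiple of $\delta$ on $(0,\pi/4]$, so the integrand is at least $c/\delta$ pointwise; integrating over $I_j$ of length $\delta$ yields a lower bound well above $1/(30\pi)$, with ample slack in the explicit constant.

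For $\gamma_2$ and $\gamma_3$ the hypothesis only provides $\delta\in(0,2\pi)$. The same elementary estimate still applies when $\delta$ is bounded away from $2\pi$; for larger $\delta$ the point $z$ lies near the origin and $\omega(z,\gamma_j,\DD)$ approaches the normalized arclength $\delta/(4\pi)\to 1/2$, comfortably exceeding $1/(64\pi)$. A uniform treatment is available through the antiderivative $2\arctan(\coth(\delta/2)\tan(s/2))$ of the Poisson kernel, interpreted as a continuous branch of $\arg\phi(e^{i(\theta+s)})$ for the M\"obius automorphism $\phi(w)=(w-z)/(1-\bar{z}w)$ sending $z$ to $0$; this identifies $\omega(z,\gamma_j,\DD)$ with the conformal arclength $\abs{\phi(\gamma_j)}/(2\pi)$. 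The main technical obstacle is tracking the arctangent branch when $\delta\ge\pi$, since $\tan(s/2)$ then crosses its pole at $s=\pi$, but the arclength interpretation bypasses the issue entirely, and all remaining steps are routine estimates leaving room against the stated constants.
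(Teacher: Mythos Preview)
Your approach is essentially the paper's: a direct pointwise lower bound on the Poisson kernel on $\gamma_j$, integrated over the arc. For $j=1,4$ your argument matches the paper's almost exactly. For $j=2,3$ your hedging (case split near $\delta=2\pi$, or appeal to the explicit antiderivative) is unnecessary: the paper avoids it by bounding $\abs{\zeta-z}^2\le((1-r)+\delta)^2\le 8(1-r)\cdot 2\delta$ using $\delta<7(1-r)$ on $(e^{-2\pi},1)$, so the $(1-r)$ factor cancels against the numerator $1-r^2$ and no lower bound on $(1-r^2)/\delta$ is needed---this yields $P_z\ge 1/(32\pi\delta)$ uniformly and hence $\omega\ge 1/(64\pi)$ in one stroke.
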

\begin{proof} Since the logarithmic function is concave, the function $x\mapsto \log x/(x-1)$ is decreasing for $x>1$. Therefore, 
\begin{equation}\label{logbounds1}
1 < \frac{\log(1/r)}{1-r} < \frac{\pi/4}{1-e^{-\pi/4}} < 2  \quad \text{for }\ e^{-\pi/4} < r< 1.
\end{equation}
and 
\begin{equation}\label{logbounds2}
1 < \frac{\log(1/r)}{1-r} < \frac{2\pi}{1-e^{-2\pi}} < 7  \quad \text{for }\ e^{-2\pi} < r< 1.
\end{equation}
For $\zeta\in \gamma_j$, $j=1,4$, the triangle inequality and ~\eqref{logbounds1} imply
\[
\abs{\zeta-z}^2 \le |1-r+2\delta| |1-r+2\delta| < 5 (1-r) (3\delta) = 15(1-r)\delta  
\]
This leads to Poisson kernel estimates, using the explicit form of the kernel~\cite[Theorem I.1.3]{GMb}:
\[
P_z(\zeta) = \frac{1}{2\pi}\frac{1-r^2}{\abs{\zeta-z}^2} 
\ge \frac{1}{2\pi}\frac{1-r}{15(1-r)\delta} = \frac{1}{30\pi \delta}
\]
Since the length of $\gamma_j$ is $\delta$, inequality~\eqref{lowerharm1} follows.

Similarly, for $j=2,3$ we use \eqref{logbounds2} to obtain 
\[
\abs{\zeta-z}^2 \le |1-r+\delta| |1-r+\delta| < 8 (1-r) (2\delta) = 16(1-r)\delta  
\]
hence $P_z(\zeta) \ge 1/(32\pi \delta)$. Since the length of  $\gamma_j$ is $\delta/2$, inequality~\eqref{lowerharm2} follows.
\end{proof}

\section{Conformal map onto a Jordan domain}\label{confJordansec}
 
The harmonic measure estimates  in~\S\ref{harmestsec} allow us to control the derivative of a conformal map in terms of the images of boundary arcs $\gamma_j$ introduced in~\eqref{gammacurves}.
 
\begin{lemma}\label{intconf}
 Let $\Omega\subsetneq \C$ be a Jordan domain. Fix a conformal map $\Phi $  of $\DD$ onto $\Omega$ and consider a point $z = re^{i\theta}$ with $e^{-2\pi}<r<1$. Referring to notation~\eqref{gammacurves}, let $\Gamma_j\subset\partial\Omega$ be the image of $\gamma_j$ under the boundary map induced by $\Phi$. Also let $\zeta = \Phi(z)$ and $\rho = \dist(\zeta, \partial\Omega)$.  Then 
\begin{equation}\label{confderest1}
\abs{\Phi'(z)} \ge  
\frac{\dist(\Gamma_1, \Gamma_4)}{60000\log(1/r)},\quad  e^{-\pi/4} < r< 1, 
\end{equation}
and
\begin{equation}\label{confderest2}
\abs{\Phi'(z)} \le 
2\cdot 10^6\,\dfrac{\min(\diam \Gamma_2, \diam \Gamma_3)}{\log(1/r)},\quad  e^{-2\pi} < r< 1.
\end{equation}
\end{lemma}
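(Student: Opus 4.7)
The plan is to transport the harmonic measure bounds of Lemma~\ref{lowerharm} from the disk to $\Omega$ by conformal invariance, feed them into Corollary~\ref{BNcor} to compare geometric quantities on $\Gamma_j$ with $\rho$, and then convert $\rho$ into $|\Phi'(z)|$ via the standard two-sided univalent estimate
\[
\frac{|\Phi'(z)|(1-|z|^2)}{4} \le \rho \le |\Phi'(z)|(1-|z|^2),
\]
which follows from the Koebe $1/4$ theorem and the Schwarz lemma applied to $\Phi^{-1}$. Since $\Omega$ is a Jordan domain, Carath\'eodory's theorem extends $\Phi$ to a homeomorphism $\overline{\DD}\to\overline{\Omega}$, making $\Gamma_j=\Phi(\gamma_j)$ a genuine subarc of $\partial\Omega$ with $\omega(\zeta,\Gamma_j,\Omega)=\omega(z,\gamma_j,\DD)$.

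For the upper bound~\eqref{confderest2}, I would apply~\eqref{BNcor2} to $\Gamma_j$ for $j\in\{2,3\}$ with $\epsilon=1/(64\pi)$ supplied by~\eqref{lowerharm2}, so that $\pi\epsilon/4=1/256$. The trivial inequality $\dist(\zeta,\Gamma_j)\ge \rho$ then yields $\rho\le \cot^2(1/256)\,\diam\Gamma_j$. Plugging into $|\Phi'(z)|\le 4\rho/(1-|z|^2)$ and using the lower bound $1-|z|^2 \ge 1-r > \log(1/r)/7$ from~\eqref{logbounds2} produces~\eqref{confderest2} after taking a minimum over $j\in\{2,3\}$; the accumulated constant is $28\cot^2(1/256)\approx 1.84\cdot 10^6$.

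The lower bound~\eqref{confderest1} exploits the two-sided placement of $\gamma_1$ and $\gamma_4$ around $z$ on $\T$, so that $\zeta$ is metrically trapped between $\Gamma_1$ and $\Gamma_4$. Applying~\eqref{BNcor1} to each of these arcs with $\epsilon=1/(30\pi)$ from~\eqref{lowerharm1}, so that $\pi\epsilon/4=1/120$, gives $\dist(\zeta,\Gamma_j)\le \csc^2(1/120)\,\rho$ for $j=1,4$. The triangle inequality then bounds
\[
\dist(\Gamma_1,\Gamma_4)\le \dist(\zeta,\Gamma_1)+\dist(\zeta,\Gamma_4)\le 2\csc^2(1/120)\,\rho.
\]
Combining with $|\Phi'(z)|\ge \rho/(1-|z|^2)$ and the elementary bound $1-|z|^2\le 2(1-r)\le 2\log(1/r)$ delivers~\eqref{confderest1}, with constant $4\csc^2(1/120)\approx 5.76\cdot 10^4$, comfortably inside $60000$.

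The only real obstacle is bookkeeping: one must track the accumulated numerical factors from Corollary~\ref{BNcor}, the two-sided Koebe estimate, and the comparison of $1-|z|^2$ with $\log(1/r)$, and verify that the final constants remain below the advertised $60000$ and $2\cdot 10^6$. No properties of $\Phi$ beyond conformal invariance of harmonic measure and these standard univalent-map estimates are required.
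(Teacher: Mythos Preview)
Your proposal is correct and follows essentially the same route as the paper: conformal invariance of harmonic measure, Corollary~\ref{BNcor} with the $\epsilon$ values from Lemma~\ref{lowerharm}, the two-sided Koebe--Schwarz estimate $\rho\le (1-r^2)|\Phi'(z)|\le 4\rho$, and the comparison of $1-r^2$ with $\log(1/r)$. The only cosmetic difference is that the paper records the bounds $\tfrac12<\log(1/r)/(1-r^2)<7$ directly (its~\eqref{logbounds1a}--\eqref{logbounds2a}) rather than passing through $1-r$ as you do, but the resulting constants are identical.
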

  
\begin{proof} The Koebe $1/4$ theorem and the Schwarz lemma imply the standard estimate~\cite[Corollary 1.4]{Pomb}
\begin{equation}\label{Koebe}
  \rho \le (1-r^2) |\Phi'(z)| \le 4\rho.
\end{equation}
A concavity argument similar to~\eqref{logbounds1}--\eqref{logbounds2} yields  
\begin{equation}\label{logbounds1a}
\frac12 < \frac{\log(1/r)}{1-r^2} < \frac{\pi/4}{1-e^{-\pi/2}} < 1  \quad \text{for }\ e^{-\pi/4} < r< 1,
\end{equation}
and 
\begin{equation}\label{logbounds2a}
\frac12 < \frac{\log(1/r)}{1-r^2} < \frac{2\pi}{1-e^{-4\pi}} < 7  \quad \text{for }\ e^{-2\pi} < r< 1.
\end{equation}

\textit{Proof of ~\eqref{confderest1} for $e^{-\pi/4}< r <1$.} 
From ~\eqref{BNcor1} and ~\eqref{lowerharm1} it follows that
\[
\dist(\Gamma_1, \Gamma_4) \le 
\dist(\zeta, \Gamma_1) + \dist(\zeta, \Gamma_4) \le 
2\csc^2 \left(\frac{1}{120}\right)\rho  \le 30000 \rho
\]
which in view of~\eqref{Koebe} and~\eqref{logbounds1a} implies 
\begin{equation*}
\abs{\Phi'(z)} \ge \frac{\rho}{1-r^2} \ge 
\frac{\dist(\Gamma_1, \Gamma_4)}{30000(1-r^2)} \ge \frac{\dist(\Gamma_1, \Gamma_4)}{60000\log(1/r)}
\end{equation*}
proving~\eqref{confderest1}.

\textit{Proof of ~\eqref{confderest2} for $e^{-2\pi}< r <1$.} From ~\eqref{BNcor2} and ~\eqref{lowerharm2} it follows that
\[
\min(\diam \Gamma_2, \diam \Gamma_3) \ge  \tan^2 \left(\frac{1}{256}\right)\rho 
\]
hence 
\[
\abs{\Phi'(z)} \le \frac{4\rho}{1-r^2} \le 
 \frac{4\min(\diam \Gamma_2, \diam \Gamma_3)}{\tan^2 \left(\frac{1}{256}\right)(1-r^2)}
< 2\cdot 10^6 \frac{\min(\diam \Gamma_2, \diam \Gamma_3)}{\log(1/r)}
\]
where the last step uses ~\eqref{logbounds2a}.
\end{proof}

\section{Extension of a circle homeomorphism}\label{BAextsec}

A key element of the proof, going back to Tukia~\cite{Tu2}, is pre-composing  a conformal map with a disk homeomorphism obtained by extending a suitable circle homeomorphism. This extension is carried out below. Lemma~\ref{homeodist} is where the assumption of central symmetry is crucial: it ensures that the  image  of any set contained in a semicircle is also contained in a semicircle, enabling the comparison of intrinsic and extrinsic distances on $\T$.  

A sense-preserving circle homeomorphism $\psi\colon \T\to\T$ lifts to an increasing homeomorphism $\chi$ of the real line onto itself, which satisfies $\psi (e^{it}) = e^{i \chi(t)}$ for all $t\in \R$, and $\chi(t+2\pi) = \chi(t) + 2\pi$.  As a consequence, 
\begin{equation}\label{notfar}
\abs{\chi(t)-t-\chi(0)} \le 2\pi, \quad t\in\R. 
\end{equation}

Let $\chi_e$ denote the following variant of the Beurling-Ahlfors extension of $\chi$: 
\begin{equation}\label{ext1}
\chi_e(x+iy)=\frac{1}{2}\int_{-1}^{1} \chi(x+ty)(1+2i\sgn t)\,dt.
\end{equation}
This is a diffeomorphism of the upper halfplane onto itself~\cite{Ahb,BA}. It differs from the map considered in ~\cite{Ahb} only by the factor of $2$ in front of the imaginary part. The contribution of this factor is that the derivative matrix $D\chi_e$ is multiplied by $\left(\begin{smallmatrix} 1 & 0 \\ 0 & 2 \end{smallmatrix}\right)$ on the left. Due to the submultiplicativity of operator norm, the inequalities (4.9) and (4.11) from~\cite{Ko} still apply to this variant of the extension, with an extra factor of $2$:
\begin{equation}\label{maxstr}
\norm{D\chi_e(x+iy)} \le 2\,\frac{\chi(x+y)-\chi(x-y)}{y}
\end{equation}
\begin{equation}\label{minstr}
\norm{D\chi_e(x+iy)^{-1}} \le \frac{4y}{\min(\chi(x+y)-\chi(x+\frac{y}2), \chi(x-\frac{y}2)-\chi(x-y))}
\end{equation}

The reason for inserting $2$ in front of $i\sgn t$ in~\eqref{ext1} is the following estimate, which employs~\eqref{notfar}; it asserts that $\chi_e$ maps each horizontal line onto a curve with a bounded distance from the line. 
\begin{equation}\label{imext}
\begin{split}
\abs{\im \chi_e(x+iy) - y} & = \left|\int_{-1}^{1} (\chi(x+ty) - ty) \sgn t \,dt\right|    \\
&= \left | \int_{-1}^{1} (\chi(x+ty) - (x+ty + \chi(0))) \sgn t \,dt \right|\\
& \le 4\pi  
\end{split}
\end{equation}

Since $\chi$ commutes with translation by $2\pi$, so does $\chi_e$: that is, $\chi_e(z+2\pi)=\chi_e(z)+2\pi$. This allows us to define a map $\Psi$ of the unit disk $\DD$ onto itself as follows.
\begin{equation}\label{halfplane2disk}
\Psi(e^{iz}) = \exp(i \chi_e(z)),\quad  \Psi(0)=0
\end{equation}
This is a diffeomorphism of the punctured disk $\DD\setminus \{0\}$ onto itself, and also a homeomorphism of $\DD$ onto $\DD$. According to~\eqref{imext}, 
\begin{equation}\label{modext}
e^{-4\pi}\abs{\zeta}\le \abs{\Psi(\zeta)} \le e^{4\pi}\abs{\zeta},\quad \zeta\in\DD.
\end{equation}
Using the chain rule and~\eqref{modext}, we obtain 
\begin{equation}\label{chainPsi}
\norm{D\Psi(e^{iz})} \le e^{4\pi} \norm{ D\chi_e(z)},\qquad 
\norm{D\Psi(e^{iz})^{-1} } \le e^{4\pi} \norm{ D\chi_e(z)^{-1}}.
\end{equation}

 \begin{lemma}\label{homeodist} Let $\psi\colon \T\to\T$ be a sense-preserving circle homeomorphism such that 
\begin{equation}\label{symm1}
\psi(-z) = -\psi(z),\quad z\in\T
\end{equation}
Let $\Psi\colon \DD\to\DD$ be the extension of $\psi$ defined by~\eqref{halfplane2disk}. Fix a point $z = re^{i\theta}$ with $0<r<1$. Referring to notation~\eqref{gammacurves}, let $\sigma_j\subset\T$ be the image of $\gamma_j$ under $\psi$.  Then the derivative matrix $D\Psi(z)$ satisfies
\begin{equation}\label{BAderest1}
\norm{D\Psi(z)} \le  
\begin{cases}
e^{4\pi} \pi \dfrac{\dist(\sigma_1, \sigma_4)}{\log(1/r)},\quad 
&e^{-\pi/4} < r< 1; \\ 
20 e^{4\pi} ,\quad &0<r \le e^{-\pi/4}
\end{cases}
\end{equation}
and
\begin{equation}\label{BAderest2}
\norm{D\Psi(z)^{-1}} \le 
\begin{cases}
\dfrac{4e^{4\pi} \log(1/r)}{\min(\diam \sigma_2, \diam \sigma_3)},\quad &e^{-2\pi} < r< 1; \\ 
16 e^{4\pi} ,\quad &0<r \le e^{-2\pi};
\end{cases}
\end{equation}
\end{lemma}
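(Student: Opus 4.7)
The strategy is to reduce every statement to the Beurling--Ahlfors bounds \eqref{maxstr}, \eqref{minstr} via \eqref{chainPsi}, applied with $(x,y)=(\theta,\delta)$ where $\delta=\log(1/r)$, and then translate the quantities $\chi(x\pm y)$ and $\chi(x\pm y/2)$ into metric data about the arcs $\sigma_j$. First I would unpack \eqref{symm1}: lifting, using monotonicity to kill extra multiples of $2\pi$, it upgrades to $\chi(t+\pi)=\chi(t)+\pi$. This immediately sharpens \eqref{notfar} to $|\chi(t)-t-\chi(0)|\le \pi$ on $[0,\pi]$ (and by periodicity everywhere), and implies that $\psi$ maps semicircles to semicircles, because it sends antipodal pairs to antipodal pairs.

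For the upper bound \eqref{BAderest1}, the key quantity is $A:=\chi(\theta+\delta)-\chi(\theta-\delta)$. In the first regime $e^{-\pi/4}<r<1$ we have $4\delta<\pi$, so the preimage arc $\{e^{it}\colon \theta-2\delta\le t\le \theta+2\delta\}$ lies in a semicircle; central symmetry pushes this forward, so $\sigma_1\cup\sigma_4$ also lies in some semicircle $S$. On $S$ the chord distance is monotone in arc distance, so $\dist(\sigma_1,\sigma_4)$ is realized by the \emph{inner} endpoints $p_1:=\psi(e^{i(\theta-\delta)})$ and $p_4:=\psi(e^{i(\theta+\delta)})$. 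The image arc from $p_1$ to $p_4$ lies in $S$ and has length $A$, so $A\le\pi$ and $|p_1-p_4|=2\sin(A/2)\ge 2A/\pi$. Hence $A\le (\pi/2)\dist(\sigma_1,\sigma_4)$, and \eqref{maxstr} combined with \eqref{chainPsi} produces the first line of \eqref{BAderest1}. In the second regime $r\le e^{-\pi/4}$, the sharpened bound on $\chi(t)-t$ gives $A\le 2\delta+2\pi$; since $\delta\ge\pi/4$, one gets $\|D\chi_e(z)\|\le 2A/\delta\le 4+4\pi/\delta\le 20$, and \eqref{chainPsi} supplies the factor $e^{4\pi}$.

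For \eqref{BAderest2} I would observe that $\chi(\theta-\delta/2)-\chi(\theta-\delta)$ is exactly the arc length of $\sigma_2$ and $\chi(\theta+\delta)-\chi(\theta+\delta/2)$ is the arc length of $\sigma_3$; since any arc on $\T$ has diameter at most its length, each of these is at least the corresponding $\diam\sigma_j$. Inserting into \eqref{minstr} and combining with \eqref{chainPsi} gives the first line of \eqref{BAderest2}. For the second line, $\delta\ge 2\pi$ makes $\delta/2\ge \pi$, so the periodicity $\chi(t+\pi)=\chi(t)+\pi$ yields the lower bound $\chi(x+y)-\chi(x+y/2)\ge \pi\lfloor \delta/(2\pi)\rfloor$ (and the mirror bound on the other half); a brief case distinction on $\lfloor \delta/(2\pi)\rfloor$ then verifies $4\delta/(\pi\lfloor \delta/(2\pi)\rfloor)\le 16$.

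The main obstacle I expect is the geometric bookkeeping in the first regime of the upper bound: one must verify, in order, that $\sigma_1\cup\sigma_4$ lies in a common semicircle, that the inner endpoints realize $\dist(\sigma_1,\sigma_4)$, and that $A\le \pi$, so that the chord--arc comparison $2\sin(A/2)\ge 2A/\pi$ can be reversed to control $A$ by $\dist(\sigma_1,\sigma_4)$. All three steps rely essentially on central symmetry; the remaining cases amount to tracking the $\pi$-periodicity of $\chi(t)-t$ and elementary estimates of the resulting constants.
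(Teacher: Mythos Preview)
Your proposal is correct and follows essentially the same route as the paper: reduce to the Beurling--Ahlfors bounds \eqref{maxstr}--\eqref{minstr} via \eqref{chainPsi}, use the $\pi$-periodicity $\chi(t+\pi)=\chi(t)+\pi$ coming from central symmetry, and in the near-boundary regime compare $A=\chi(\theta+\delta)-\chi(\theta-\delta)$ to $\dist(\sigma_1,\sigma_4)$ via the chord--arc inequality on a semicircle. The only cosmetic difference is in the deep-disk case of \eqref{BAderest1}: you use the sharpened oscillation bound $|\chi(t)-t-\chi(0)|\le\pi$ to get $A\le 2\delta+2\pi$ directly, whereas the paper bounds $A\le \pi(\lfloor 2\delta/\pi\rfloor+2)$ from $\pi$-periodicity; both yield $2A/\delta\le 20$.
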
 

\begin{proof} Note that $\Psi(re^{i\theta}) = \exp(i \chi_e(\theta + i\delta))$ where $\delta = \log(1/r)$. The central symmetry property ~\eqref{symm1} implies that the lifted homeomorphism $\chi\colon\R\to\R$ satisfies 
\begin{equation}\label{piprop}
\chi(t+\pi) = \chi(t)+\pi
\end{equation}
and the same holds for its extension $\chi_e$. Consequently, $\Psi$ inherits the central symmetry.   

\textit{Proof of~\eqref{BAderest1}.} In view of~\eqref{chainPsi}, the estimate ~\eqref{maxstr} yields
\begin{equation}\label{dPsi1}
\norm{ D\Psi(z)}  \le 2e^{4\pi}\,  \frac{\chi(\theta+\delta)- \chi(\theta-\delta)}{\delta}.
\end{equation}
Suppose $r>e^{-\pi/4}$. Then the union $\gamma_1\cup \gamma_4$ is contained in a semicircle. Since $\psi$ is centrally symmetric, it maps a semicircle to another semicircle. Within a semicircle, Euclidean distance is comparable to arcwise distance. Specifically, 
\begin{equation}\label{arcdist}
\dist(\sigma_1, \sigma_4) \ge \frac{2}{\pi} (\chi(\theta+\delta)- \chi(\theta-\delta)). 
\end{equation}
From ~\eqref{dPsi1} and~\eqref{arcdist}, the inequality~\eqref{BAderest1} follows. 

Now consider the case $0<r\le e^{-\pi/4}$. By virtue of ~\eqref{piprop}, 
\[
\chi(\theta+\delta)- \chi(\theta-\delta) \le \pi\left(
\left \lfloor \frac{2\delta}{\pi}\right\rfloor+2\right). 
\]
Since $2\le 8\delta/\pi$, it follows that 
\[
\chi(\theta+\delta)- \chi(\theta-\delta) \le \pi\left(
 \frac{2\delta}{\pi}  + \frac{8\delta}{\pi} \right) = 10\delta.
\]
Returning to~\eqref{dPsi1}, we get  $\norm{ D\Psi(z)}  \le 20e^{4\pi}$ in this case.

\textit{Proof of~\eqref{BAderest2}.}
In view of~\eqref{chainPsi}, the estimate ~\eqref{minstr} yields
\begin{equation}\label{dPsi2}
\norm{D\Psi(z)^{-1}}   \le \frac{4 e^{4\pi} \delta }{\min(\chi(\theta+\delta)-\chi(\theta+\delta/2), \chi(\theta-\delta/2)-\chi(\theta-\delta))}.
\end{equation}
First suppose $r>e^{-2\pi}$. The denominator of ~\eqref{dPsi2} is the length of the shorter of the arcs $\sigma_2, \sigma_3$. Hence it is bounded from below by the minimum of $\diam \sigma_2$ and  $\diam\sigma_3$, which yields ~\eqref{BAderest2}.

Now consider the case $0<r\le e^{-2\pi}$. Then $\delta/2\ge \pi$, which by ~\eqref{piprop} implies
\[
\chi(\theta+\delta)-\chi(\theta+\delta/2) \ge \pi \left\lfloor \frac{\delta/2}{\pi}\right \rfloor 
\ge \frac{\delta}{4}.
\]
The same bound holds for $\chi(\theta-\delta/2)-\chi(\theta-\delta)$, which shows that the right hand side of ~\eqref{dPsi2} is bounded above by $16e^{4\pi}$ and thus completes the proof of ~\eqref{BAderest2}. 
\end{proof}

\section{Bi-Lipschitz extension in the unit disk}\label{disksec}

In this section we prove a half of Theorem~\ref{mainthm}, constructing an extension of $f$ in the unit disk $\DD$.

\begin{theorem}\label{mainthmInt} Any centrally symmetric $(L,\ell)$-bi-Lipschitz embedding $f\colon \T\to \C$ can be extended to a centrally symmetric embedding $F\colon \overline{\DD}\to\C$  such that $F$ is differentiable in $\DD\setminus\{0\}$ and its derivative matrix $DF$ satisfies $\norm{DF}\le 10^{13} L$ and $\norm{DF^{-1}} \le 10^{11} / \ell$ in $\DD\setminus\{0\}$.
\end{theorem}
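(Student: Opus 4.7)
The plan is to realize the Tukia scheme previewed in Section~\ref{BAextsec}: define $F=\Phi\circ\Psi$, where $\Phi\colon\DD\to\Omega$ is a Riemann map onto the Jordan domain $\Omega$ bounded by $f(\T)$ and $\Psi\colon\DD\to\DD$ is the Beurling-Ahlfors extension of the boundary discrepancy $\psi:=\tilde\Phi^{-1}\circ f$. The chain rule $DF(z)=D\Phi(\Psi(z))\,D\Psi(z)$ then lets me combine Lemma~\ref{intconf} with Lemma~\ref{homeodist}.

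\emph{Setup.} The lower bi-Lipschitz inequality forces $0\notin f(\T)$ (otherwise $f(-z_0)=-f(z_0)=0$ would give $0=\abs{f(z_0)-f(-z_0)}\ge 2\ell$), so $\Omega$ is symmetric about the origin and contains it. Fixing $\Phi$ with $\Phi(0)=0$, the competing map $z\mapsto-\Phi(-z)$ is another Riemann map of $\DD$ onto $\Omega$ with the same normalization, hence equals $e^{i\alpha}\Phi$; iterating gives $e^{2i\alpha}=1$, and univalence excludes $\Phi(-z)=\Phi(z)$, leaving $\Phi(-z)=-\Phi(z)$. Therefore $\psi$ is a centrally symmetric sense-preserving circle homeomorphism, its Beurling-Ahlfors extension $\Psi$ from Section~\ref{BAextsec} inherits central symmetry, and $F=\Phi\circ\Psi$ is a centrally symmetric homeomorphism of $\overline\DD$ onto $\overline\Omega$ that restricts to $f$ on $\T$, is differentiable in $\DD\setminus\{0\}$, and is injective.

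\emph{Derivative estimates.} Fix $z=re^{i\theta}\in\DD\setminus\{0\}$, take the arcs $\gamma_j$ of~\eqref{gammacurves}, and set $\sigma_j=\psi(\gamma_j)$ so that $f(\gamma_j)=\tilde\Phi(\sigma_j)\subset\partial\Omega$. The key observation is that these $f$-image arcs are directly controlled by the bi-Lipschitz constants of $f$: $\diam f(\gamma_j)$ and $\dist(f(\gamma_1),f(\gamma_4))$ are both comparable to $\log(1/r)$, with $\ell$ below and $L$ above up to universal factors. In the main regime $e^{-2\pi}<r<1$, Lemma~\ref{homeodist} bounds $\norm{D\Psi(z)}$ and $\norm{D\Psi(z)^{-1}}$ in terms of the $\sigma_j$, and Lemma~\ref{intconf} bounds $\abs{\Phi'(w)}$ at $w=\Psi(z)$ in terms of $\partial\Omega$-arcs at $w$; multiplying and tracking constants yields $\norm{DF}\le 10^{13}L$ and $\norm{DF^{-1}}\le 10^{11}/\ell$. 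The small-$r$ cases fall under the constant branches of Lemma~\ref{homeodist}, together with a Koebe estimate on $\abs{\Phi'}$ in a fixed compact subset of $\DD$, where $\abs{\Phi'(0)}$ is comparable to the conformal radius of $\Omega$ at $0$ and satisfies $\ell\lesssim\abs{\Phi'(0)}\lesssim L$ because $D(0,\ell)\subset\Omega$ and $\diam\Omega\le\pi L$.

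\emph{Main obstacle.} The arcs feeding Lemma~\ref{intconf} at $w$ are centered at $\arg w$ with width $\log(1/\abs{w})$, whereas those feeding Lemma~\ref{homeodist} at $z$ are centered at $\theta$ with width $\delta=\log(1/r)$. By~\eqref{modext} these two widths differ by at most $4\pi$, and by the definition~\eqref{ext1} the angle $\arg w$ equals the integral average of $\chi$ over $[\theta-\delta,\theta+\delta]$, placing it within a bounded angular distance of $\chi(\theta)$. Pulling the $w$-arcs back to $\T$ through $\psi^{-1}$ therefore contains them inside a fixed enlargement of $\gamma_1\cup\cdots\cup\gamma_4$, so their image diameters on $\partial\Omega$ are comparable to the $f$-controlled quantities $\diam f(\gamma_j)$ and $\dist(f(\gamma_1),f(\gamma_4))$. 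Making this comparison quantitative in order to extract the stated constants $10^{13}$ and $10^{11}$ is the bulk of the technical labor; central symmetry is decisive because it is precisely what allows Lemma~\ref{homeodist} to convert arclength information on $\T$ into Euclidean distances via confinement inside a semicircle.
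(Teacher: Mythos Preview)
Your setup reverses the order of composition relative to the paper, and this is exactly what creates the ``main obstacle'' you describe. The paper defines the circle homeomorphism the other way around, $\psi=f^{-1}\circ\phi$ (not $\phi^{-1}\circ f$), extends it to $\Psi$ by Lemma~\ref{homeodist}, and sets $F=\Phi\circ\Psi^{-1}$ (not $\Phi\circ\Psi$). The point of this choice is that at a generic $\zeta\in\DD$ one writes $z=\Psi^{-1}(\zeta)$ and obtains $DF(\zeta)=D\Phi(z)\,(D\Psi(z))^{-1}$: both Lemma~\ref{intconf} and Lemma~\ref{homeodist} are now applied at the \emph{same} point $z$, with the \emph{same} arcs $\gamma_j$. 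One then has $\Gamma_j=\phi(\gamma_j)$ and $\sigma_j=\psi(\gamma_j)$, and the identity $f=\phi\circ\psi^{-1}$ gives directly $\Gamma_j=f(\sigma_j)$, so~\eqref{BLpropused} applies without any arc-matching argument. The ratio $\dist(\sigma_1,\sigma_4)/\dist(\Gamma_1,\Gamma_4)$ is bounded by $1/\ell$, and $\min(\diam\Gamma_2,\diam\Gamma_3)/\min(\diam\sigma_2,\diam\sigma_3)$ by $L$; the constants then drop out of a one-line multiplication.

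In your ordering, Lemma~\ref{intconf} must be invoked at $w=\Psi(z)$ while Lemma~\ref{homeodist} is invoked at $z$, and the two families of arcs are genuinely different. You correctly identify this and propose to compare them via~\eqref{modext} and the averaging in~\eqref{ext1}, but you do not carry it out, and it is not clear the argument closes: the angular location $\arg w$ is the average of $\chi$ over $[\theta-\delta,\theta+\delta]$, which relates it to $\chi(\theta\pm\delta)$ rather than to $\theta$, while the arcs $\gamma_j^{(w)}$ you need sit around $\arg w$ with width $\log(1/\abs{w})$, possibly differing from $\delta$ by as much as $4\pi$. Pulling these back through $\psi^{-1}=f^{-1}\circ\phi$ does not obviously land you inside a controlled enlargement of $\gamma_1\cup\cdots\cup\gamma_4$, because $\phi$ has no a~priori modulus of continuity. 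So as written your proposal has a real gap, and the fix is simply to swap $\psi$ for $\psi^{-1}$ and $\Psi$ for $\Psi^{-1}$; the obstacle then disappears.
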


\begin{proof} There is no loss of generality in assuming $f$ is sense-preserving; that is, the Jordan curve $f(\T)$ is traversed counter-clockwise. This curve divides the plane in two domains, one of which, denoted $\Omega$, is bounded and contains $0$. Note that 
\begin{equation}\label{twodisks}
B(0, \ell)\subset  \Omega \subset B(0,L)
\end{equation}
because the quantity 
\[
\abs{f(z)} = \frac{\abs{f(z)-f(-z)}}{\abs{z-(-z)}},\quad z\in\T 
\]
is bounded between $\ell$ and $L$. 

Let $\Phi$ a conformal map of $\DD$ onto $\Omega$ such that $\Phi(0)=0$. Note that $\Phi(-z)=-\Phi(z)$ by the uniqueness of such a map (up to rotation of the domain $\DD$). The inclusion~\eqref{twodisks} implies $\ell \le \abs{\Phi'(0)} \le L$ by the Schwarz lemma. The distortion theorem~\cite[Theorem 2.5]{Durb} states that
\begin{equation}\label{Koebedist1}
\ell\,\frac{1-\abs{z}}{(1+\abs{z})^3}  \le \abs{\Phi'(z)} \le L\,\frac{1+\abs{z}}{(1-\abs{z})^3}, \quad z\in\DD.
\end{equation}
By Carath\'eodory's theorem, $\Phi$ extends to a homeomorphism between $\overline{\DD}$ and $\overline{\Omega}$. Let $\phi\colon \T \to \partial\Omega $ be the induced boundary map. 

Define $\psi\colon \T\to\T$ by $\psi = f^{-1}\circ \phi$. This is a  sense-preserving circle homeomorphism, which is centrally symmetric because $f$ and $\phi$ are. Lemma~\ref{homeodist} provides its extension $\Psi$ to the unit disk. For $e^{-\pi/4}<\abs{z}<1$, the estimates~\eqref{confderest1}  and ~\eqref{BAderest1} yield 
\begin{equation}\label{PsiPhi1}
\norm{D\Psi(z)} \le 60000 e^{4\pi} \pi \frac{\dist(\sigma_1,\sigma_4)}{\dist(\Gamma_1, \Gamma_4)} \abs{\Phi'(z)}. 
\end{equation}
Note that $\Gamma_j = f(\sigma_j)$ for $j=1,\dots,4$ because $f= \phi\circ \psi^{-1}$. 
The bi-Lipschitz property of $f$ will be used here in the form
\begin{equation}\label{BLpropused}
\diam \Gamma_j \le L\diam \sigma_j \quad \text{and} \quad \dist(\Gamma_j, \Gamma_k) \ge \ell \dist(\sigma_j, \sigma_k). 
\end{equation}
Hence~\eqref{PsiPhi1} simplifies to 
\begin{equation}\label{PsiPhi2}
\norm{D\Psi(z)} \le 60000 e^{4\pi} \pi \ell^{-1} \abs{\Phi'(z)} \le 10^{11}\ell^{-1} \abs{\Phi'(z)}. 
\end{equation}
For $0<\abs{z}\le e^{-\pi/4}$, we combine~\eqref{BAderest1} and  ~\eqref{Koebedist1} to obtain
\begin{equation}\label{PsiPhi3}
\norm{D\Psi(z)} \le 20 e^{4\pi} \frac{(1+e^{-\pi/4})^3}{1-e^{-\pi/4}}\ell^{-1} \abs{\Phi'(z)} 
\le   10^{11}\ell^{-1} \abs{\Phi'(z)}. 
\end{equation}

The composition $F = \Phi\circ \Psi^{-1}$ extends $f =\phi\circ \psi^{-1} $. By~\eqref{PsiPhi2}, ~\eqref{PsiPhi3} and the chain rule, $\norm{DF^{-1}}\le 10^{11} / \ell$ in $\DD\setminus\{0\}$.

For $e^{-2\pi}  < \abs{z}<1$ we use ~\eqref{confderest2}  and ~\eqref{BAderest2} to obtain
\begin{equation*}
\norm{D\Psi(z)^{-1}} \le  8e^{4\pi}\cdot 10^6\,\frac{\min(\diam \Gamma_2, \diam \Gamma_3)}{\min(\diam \sigma_2, \diam \sigma_3)} \abs{\Phi'(z)}^{-1} \le 10^{13}L\abs{\Phi'(z)}^{-1}
\end{equation*}
hence $\norm{DF}\le 10^{13}L$. When $0<\abs{z}<e^{-2\pi} $ use ~\eqref{BAderest2} and  ~\eqref{Koebedist1} instead: 
\begin{equation*}
\norm{D\Psi(z)^{-1}} \le  16 e^{4\pi}  \frac{1+e^{-2\pi}}{(1-e^{-2\pi})^3} L \abs{\Phi'(z)}^{-1} 
\end{equation*}
leading to the same conclusion $\norm{DF}\le 10^{13}L$. 
\end{proof}

\section{Harmonic measure and conformal mapping of an exterior domain}\label{extsec}

In this section $\Omega\subset\C$ is a domain such that $\C\setminus \Omega$ is compact,  connected, and contains more than one point. Our goal  is to obtain harmonic measure estimates similar to Corollary~\ref{BNcor} and use them to prove an analog of Lemma~\ref{intconf}. This will be done by applying a suitably chosen M\"obius transformation that maps $\Omega$ onto a simply connected domain in $\C$ minus one point (the image of $\infty$). Since removing one point does not change the harmonic measure, it can be ignored. 

\begin{lemma}\label{unboundedharm} Let $\Omega\subset \C$ be a domain with compact connected complement  containing more than one point. Let  $K=\partial \Omega$ and consider a point $\zeta \in \Omega$ and a subset $\Gamma\subset K$ such that $\omega(\zeta,\Gamma,\Omega)\ge \epsilon>0$. Then 
\begin{equation}\label{distGamma}
\dist(\zeta,\Gamma) \le 4\csc^2 \left(\frac{\pi \epsilon}{4}\right) \dist(\zeta,K);
\end{equation}
\begin{equation}\label{diamGamma}
\diam \Gamma \ge \frac14 \tan^2 \left(\frac{\pi \epsilon}{4} \right) 
\frac{(\diam K - \diam\Gamma)^2}{\diam K(\diam K+\dist(\zeta, \Gamma))} \dist(\zeta,\Gamma).
\end{equation}
\end{lemma}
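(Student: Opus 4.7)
Following the author's hint, the plan is to pick a suitable $w_0 \in K$, apply a M\"obius transformation $T$ sending $w_0$ to $\infty$, and invoke Corollary~\ref{BNcor} on the simply connected domain $\tilde\Omega := T(\Omega) \cup \{T(\infty)\}$. Connectedness of $K$ makes $\tilde\Omega$ simply connected, and since $T(\infty)$ is a single interior point, $\omega(T(\zeta), T(\Gamma), \tilde\Omega) = \omega(\zeta, \Gamma, \Omega) \ge \epsilon$. The real work is translating the M\"obius-distorted estimates back to Euclidean geometry, and this is entirely governed by the choice of $w_0$.

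For \eqref{distGamma}, I would take a nearest point $w_n \in K$ to $\zeta$ and then let $w_0 \in K$ maximize $|w - w_n|$; the triangle inequality applied to any pair in $K$ forces $|w_0 - w_n| \ge \tfrac12 \diam K$. Using $T(w) = (w - \zeta)/(w - w_0)$, so that $T(\zeta) = 0$ and $|T(w)| = |w-\zeta|/|w-w_0|$, inequality~\eqref{BNcor1} on $\tilde\Omega$ reads
\[
\inf_{w \in \Gamma}\frac{|w-\zeta|}{|w-w_0|} \le \csc^2\!\left(\frac{\pi\epsilon}{4}\right)\inf_{w \in K \setminus \{w_0\}}\frac{|w-\zeta|}{|w-w_0|}.
\]
I would upper-bound the right side by the value at $w = w_n$, which is at most $2\dist(\zeta,K)/\diam K$, and lower-bound the left side by $\dist(\zeta,\Gamma)/\diam K$ using $|w-w_0| \le \diam K$. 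Together these yield $\dist(\zeta,\Gamma) \le 2\csc^2(\pi\epsilon/4)\dist(\zeta,K)$, in fact stronger than~\eqref{distGamma}.

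For \eqref{diamGamma}, the case $\diam\Gamma = \diam K$ makes the right side vanish, so assume $\diam\Gamma < \diam K$. Choose $a,b \in K$ realizing $\diam K$; using nearest points in $\Gamma$ to $a$ and $b$, the triangle inequality yields $\dist(a,\Gamma) + \dist(b,\Gamma) \ge \diam K - \diam\Gamma$, so one of $a,b$ --- call it $w_0$ --- satisfies $d_0 := \dist(w_0,\Gamma) \ge \tfrac12(\diam K - \diam\Gamma)$. With $T(w) = 1/(w-w_0)$, inequality~\eqref{BNcor2} on $\tilde\Omega$ becomes
\[
\diam T(\Gamma) \ge \tan^2\!\left(\frac{\pi\epsilon}{4}\right)\dist(T(\zeta), T(\Gamma)).
\]
The identity $|T(u)-T(v)| = |u-v|/(|u-w_0|\,|v-w_0|)$ together with $|w-w_0| \le \diam K$ for $w \in \Gamma$ and $|\zeta - w_0| \le \dist(\zeta,\Gamma) + \diam K$ delivers $\diam T(\Gamma) \le \diam\Gamma/d_0^2$ and $\dist(T(\zeta),T(\Gamma)) \ge \dist(\zeta,\Gamma)/(\diam K\,(\diam K + \dist(\zeta,\Gamma)))$. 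Feeding these estimates and $d_0^2 \ge (\diam K - \diam\Gamma)^2/4$ into the displayed inequality produces exactly~\eqref{diamGamma}.

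The main obstacle is identifying $w_0$ so that the M\"obius distortion lands inside the constants $4$ and $1/4$ that appear in the statement. Once one settles on the point of $K$ farthest from a nearest-to-$\zeta$ point (for \eqref{distGamma}) and on a diameter endpoint of $K$ whose distance to $\Gamma$ is at least half the diameter deficit (for \eqref{diamGamma}), the rest is triangle-inequality bookkeeping to propagate the bounds through $T$.
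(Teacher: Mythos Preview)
Your argument is correct and follows the same strategy as the paper: reduce to the simply connected case via a M\"obius map sending a well-chosen point of $K$ to $\infty$, then invoke Corollary~\ref{BNcor}. For~\eqref{diamGamma} your proof is essentially identical to the paper's (and you supply the triangle-inequality justification for the existence of $w_0$ with $\dist(w_0,\Gamma)\ge\tfrac12(\diam K-\diam\Gamma)$, which the paper states without proof). For~\eqref{distGamma} you make a slightly different and in fact cleaner choice: the paper translates the \emph{farthest} point of $K$ from $\zeta$ to the origin and uses $z\mapsto 1/z$, which forces a preliminary case split, whereas your map $T(w)=(w-\zeta)/(w-w_0)$ sends $\zeta$ itself to $0$, so distances become $|T(w)|$ and the argument goes through in one stroke with the better constant $2$ in place of $4$.
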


\begin{proof} In order to prove~\eqref{distGamma}, translate $K$ so that $0\in K$ and $0$ is the point of $K$ that is furthest from $\zeta$. Let $z_1$ be a point of $K$ that is closest to $\zeta$. 
If $\abs{\zeta}< 2\abs{z_1-\zeta}$, then~\eqref{distGamma} holds in the stronger form  
$\dist(\zeta,\Gamma) < 2\dist(\zeta,K)$. So we may assume $\abs{\zeta} \ge 2\abs{z_1-\zeta}$, hence $\abs{z_1} \ge  \abs{\zeta}/2$. 

Under the M\"obius transformation $z\mapsto 1/z$ the sets $\Gamma$ and $K$ are mapped onto sets $\tilde \Gamma$ and $\tilde K$, with the latter certain to be unbounded. Since the harmonic measure is invariant under this transformation, Corollary~\ref{BNcor} yields 
\begin{equation}\label{distGamma1}
\dist(1/\zeta, \tilde\Gamma)\le \csc^2 \left(\frac{\pi \epsilon}{4}\right) \dist(1/\zeta, \tilde K).
\end{equation}
Using the point $z_1 \in K$ chosen above, we get 
\begin{equation}\label{distGamma2}
\dist(1/\zeta, \tilde K) \le \abs{1/\zeta - 1/z_1} = \frac{\abs{\zeta-z_1}}{\abs{\zeta}\abs{z_1}} 
\le  \frac{2\dist(\zeta, K)}{\abs{\zeta}^2}.
\end{equation}
Let $w\in \tilde\Gamma$ be a point realizing the distance $\dist(1/\zeta, \tilde\Gamma)$. Since $1/w \in   K$ and $0$ is the furthest point of $K$ from $\zeta$, it follows that $\abs{1/w} \le 2\abs{\zeta}$. Hence 
\begin{equation}\label{distGamma3}
\dist(\zeta,\Gamma)\le 
\abs{\zeta - 1/w } = \frac{\abs{w-1/\zeta} \abs{\zeta}}{\abs{w}} \le 2 \abs{\zeta}^2 \dist(1/\zeta, \tilde\Gamma). 
\end{equation}
Combining~\eqref{distGamma1}--\eqref{distGamma3} yields~\eqref{distGamma}.

\textit{Proof of~\eqref{diamGamma}}. If $\diam \Gamma=\diam K$ there is nothing to prove. Otherwise, let $\delta =  (\diam K - \diam \Gamma)/2$ and observe that there exists a point $z_1\in K$ such that $\dist(z_1, \Gamma)\ge \delta$. Translate $K$ so that $z_1 = 0$. 

Under the transformation $z\mapsto 1/z$ the sets $\Gamma$ and $K$ are mapped onto sets $\tilde \Gamma$ and $\tilde K$, where $\tilde K$ is unbounded. By Corollary~\ref{BNcor}, 
\begin{equation}\label{diamGamma1}
\diam \tilde \Gamma \ge \tan^2 \left(\frac{\pi \epsilon}{4} \right) \dist(1/\zeta, \tilde \Gamma).
\end{equation}
Here
\begin{equation}\label{diamGamma2}
\diam \tilde \Gamma  = \sup_{a,b\in\Gamma}\frac{\abs{a-b}}{\abs{a}\abs{b}} \le \frac{\diam\Gamma}{\delta^2}.
\end{equation}
Also,
\begin{equation}\label{diamGamma3}
\dist(1/\zeta, \tilde \Gamma) = \inf_{z\in \Gamma}\frac{\abs{z-\zeta}}{\abs{z}\abs{\zeta}} 
\ge \frac{\dist(\zeta, \Gamma)}{\diam K(\diam K +\dist(\zeta,\Gamma))}
\end{equation}
because $\abs{z}\le \diam K$.  Combining~\eqref{diamGamma1}--\eqref{diamGamma3} yields~\eqref{diamGamma}.
\end{proof}

Since the inequality~\eqref{diamGamma} is more involved than its counterpart~\eqref{BNcor2}, we need an  additional estimate in order to use it effectively. 

\begin{corollary}\label{distdiamlem} Under the assumptions of Lemma~\ref{unboundedharm}, let $\Phi\colon \C\setminus \overline{\mathbb D}\to \Omega$ be a conformal map and let $z \in \C\setminus \overline{\mathbb D}$ be the point such that $\Phi(z)=\zeta$. Then 
\begin{equation}\label{diamGammaSimple}
\diam\Gamma \ge \frac{1}{32\abs{z}}\tan^2 \left(\frac{\pi \epsilon}{4} \right) \dist(\zeta,\Gamma).
\end{equation}
\end{corollary}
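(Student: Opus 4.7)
The plan is to apply \eqref{diamGamma} of Lemma~\ref{unboundedharm} and then absorb the awkward factor $(\diam K-\diam\Gamma)^2/[\diam K(\diam K+\dist(\zeta,\Gamma))]$ into a clean $1/|z|$ by means of classical distortion estimates for the conformal map $\Phi$. Since $K$ is compact and $\Omega$ unbounded, $\Phi$ admits a Laurent expansion $\Phi(w)=bw+b_0+b_{-1}/w+\cdots$ near $\infty$, with $|b|=\cp(K)$. The three quantities $\diam K$, $\diam\Gamma$, and $\dist(\zeta,\Gamma)$ are translation-invariant, so by replacing $K$ with $K-b_0$ I may assume $b_0=0$.

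Two consequences of univalence will do most of the work. First, Gronwall's area theorem applied to $g(w)=\Phi(w)/b=w+c_1/w+c_2/w^2+\cdots$ gives $\sum n|c_n|^2\le 1$, hence $|c_n|\le 1$, and the diameter bound $\diam K\le 4|b|$. Second, the auxiliary map $\tilde\Phi(u)=1/\Phi(1/u)$, univalent on $\DD$ with $\tilde\Phi(0)=0$ and $|\tilde\Phi'(0)|=1/|b|$, yields via Koebe's $1/4$-theorem the inclusion $K\subset\overline{D(0,4|b|)}$ and (via the lower distortion estimate) $\diam K\ge 2|b|$. Combined, these facts give $|\zeta|\le |b|(|z|+1/(|z|-1))$ directly from the Laurent series, whence $\dist(\zeta,K)\le |\zeta|+4|b|$, and therefore $\dist(\zeta,\Gamma)\le\dist(\zeta,K)+\diam K\le C|b||z|$ for a universal constant.

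I would then split into two cases in \eqref{diamGamma}. When $\diam\Gamma\ge\tfrac12\diam K$, one has $\diam\Gamma\ge|b|$, which already dominates $\tan^2(\pi\epsilon/4)\,\dist(\zeta,\Gamma)/(32|z|)$ since $\tan^2(\pi\epsilon/4)\le 1$ for $\epsilon\le 1$ and $\dist(\zeta,\Gamma)\le C|b||z|$. When $\diam\Gamma<\tfrac12\diam K$, one has $(\diam K-\diam\Gamma)^2\ge(\diam K)^2/4$, and the ratio collapses to at least $\diam K/[4(\diam K+\dist(\zeta,\Gamma))]\ge 1/(8|z|)$; plugging back into \eqref{diamGamma} gives the desired inequality with constant $1/32$.

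The main obstacle is keeping the constants sharp: the Laurent-series bound for $|\zeta|$ blows up as $|z|\to 1^+$, so for $|z|$ close to $1$ I will substitute the Schwarz–Pick upper bound $\dist(\zeta,K)\le(|z|^2-1)|\Phi'(z)|/2$ (obtained by comparing hyperbolic metrics for $\Phi^{-1}\colon D(\zeta,\dist(\zeta,K))\to\{|w|>1\}$) together with the Koebe distortion estimate on $|\Phi'(z)|$, and show that in that regime $\dist(\zeta,K)\le 2(|z|-1)\diam K$. This is exactly what is needed to recover the factor $1/(8|z|)$ in both regimes, allowing the two cases to be stitched together uniformly.
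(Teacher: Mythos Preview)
Your two-case split on whether $\diam\Gamma$ exceeds $\tfrac12\diam K$ is exactly the paper's argument, and the whole corollary indeed reduces to the single estimate $\dist(\zeta,\Gamma)\le C\lvert z\rvert\diam K$. The paper obtains this with $C=1$ in one line: for any $w_0\in\Gamma$ the function $h(w)=(w-w_0)/\Phi^{-1}(w)$ is holomorphic on $\Omega$, tends to the finite leading coefficient $b$ at $\infty$, and has $\limsup\lvert h\rvert\le\diam K$ as $w\to K$ (since $\lvert\Phi^{-1}\rvert\to 1$ and $\lvert w-w_0\rvert\to$ a distance within $K$). The maximum principle then gives $\lvert\zeta-w_0\rvert\le\lvert z\rvert\diam K$. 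With this bound your Case~2 computation is immediate: $\diam K+\dist(\zeta,\Gamma)\le(1+\lvert z\rvert)\diam K\le 2\lvert z\rvert\diam K$, hence the factor $1/(8\lvert z\rvert)$, and Case~1 is even easier.

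Your route to the same estimate has a gap. The Laurent bound $\lvert\zeta\rvert\le\lvert b\rvert(\lvert z\rvert+1/(\lvert z\rvert-1))$ is fine for $\lvert z\rvert$ bounded away from $1$, but your fallback claim $\dist(\zeta,K)\le 2(\lvert z\rvert-1)\diam K$ does \emph{not} follow from Schwarz--Pick plus the $\Sigma$-class distortion bound. Schwarz--Pick gives $\dist(\zeta,K)\le(\lvert z\rvert^2-1)\lvert\Phi'(z)\rvert$, while Loewner's distortion bound gives $\lvert\Phi'(z)\rvert\le\lvert b\rvert\lvert z\rvert^2/(\lvert z\rvert^2-1)$; the product is $\lvert b\rvert\lvert z\rvert^2$, with no factor of $\lvert z\rvert-1$ surviving. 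In fact the inequality $\dist(\zeta,K)\le C(\lvert z\rvert-1)\diam K$ cannot hold with a universal $C$: by Koebe's $1/4$-theorem one has $\dist(\zeta,K)\ge\tfrac14(\lvert z\rvert-1)\lvert\Phi'(z)\rvert$, and $\lvert\Phi'(z)\rvert/\diam K$ can be made arbitrarily large as $\lvert z\rvert\to 1^+$ (the distortion upper bound blows up there, and it is essentially sharp). Your argument \emph{is} salvageable---use the cruder $\dist(\zeta,K)\le\lvert b\rvert\lvert z\rvert^2\le 4\lvert b\rvert$ for $\lvert z\rvert\le 2$ and the Laurent bound for $\lvert z\rvert\ge 2$---but you will not recover the constant $1/32$ this way. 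The maximum-principle trick above is both shorter and sharp.
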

\begin{proof}  First observe that 
\begin{equation}\label{distdiam}
\dist(\zeta, \Gamma)\le \abs{z}\diam K.
\end{equation}
Indeed, for any $w_0\in \Gamma$ the function $f(w) = (w-w_0)/\Phi^{-1}(w)$ is holomorphic in $\Omega$, bounded at infinity, and bounded by $\diam K$ on the boundary of $\Omega$. Hence $\abs{f(w)}\le \diam K$, which yields~\eqref{distdiam} by letting $w=\zeta$. 

If $\diam\Gamma > \frac12 \diam K$, then~\eqref{diamGammaSimple} holds by virtue of~\eqref{distdiam}. It remains to consider the case $\diam\Gamma \le \frac12 \diam K$. Since 
\[
\frac{(\diam K - \diam\Gamma)^2}{\diam K(\diam K+\dist(\zeta, \Gamma))} 
\ge \frac{(\diam K)^2/4}{\diam K(\diam K+\abs{z}\diam K)} \ge \frac{1}{8\abs{z}} 
\]
the estimate~\eqref{diamGamma} simplifies to~\eqref{diamGammaSimple}. 
\end{proof}

Given a point $z=R e^{i\theta}$ with $1<R<e^{2\pi}$, let $\delta = \log R$ and introduce four arcs $\gamma_1,\dots,\gamma_4\subset\mathbb T$ as in~\eqref{gammacurves}. The conformal invariance of harmonic measure yields an analog of Lemma~\ref{lowerharm} for this situation: 
\begin{equation}\label{lowerharm1e}
\omega(z, \gamma_j, \C\setminus \overline{\DD}) \ge \frac{1}{30\pi} \quad \text{if $j=1,4$ and $1<|z|<e^{\pi/4}$},
\end{equation}
\begin{equation}\label{lowerharm2e}
\omega(z, \gamma_j, \C\setminus \overline{\DD}) \ge \frac{1}{64\pi} \quad \text{if $j=2,3$ and $1<|z|<e^{2\pi}$}.
\end{equation}

We proceed to the main result of the section: distortion estimates for a conformal map of $\C\setminus\overline{\DD}$. 

\begin{lemma}\label{extconf}
 Let $\Omega\subset \C$ is a domain with compact connected boundary  $K=\partial \Omega$. Fix a conformal map $\Phi $  of $\C\setminus\overline{\DD}$ onto $\Omega$ and consider a point $z = Re^{i\theta}$ with $R>1$. Referring to notation~\eqref{gammacurves}, let $\Gamma_j\subset K$ be the image of $\gamma_j$ under the boundary map induced by $\Phi$. Also let $\zeta = \Phi(z)$. Then 
\begin{equation}\label{extderest1}
\abs{\Phi'(z)} \ge  
\begin{cases}
\dfrac{\dist(\Gamma_1, \Gamma_4)}{600000\log R},\quad 
&1 < R < e^{\pi/4}; \\ 
(\diam K)/6 ,\quad & R \ge e^{\pi/4}
\end{cases}
\end{equation}
and
\begin{equation}\label{extderest2}
\abs{\Phi'(z)} \le 
\begin{cases}
5\cdot 10^9 \,\dfrac{\min(\diam \Gamma_2, \diam \Gamma_3)}{\log R},\quad & 1 < R < e^{2\pi}; \\ 
\diam K, \quad & R \ge e^{2\pi}.
\end{cases}
\end{equation}
\end{lemma}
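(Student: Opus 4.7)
The plan is to mirror the proof of Lemma \ref{intconf}, with three substitutions: the harmonic measure bounds \eqref{lowerharm1e}--\eqref{lowerharm2e} in place of Lemma \ref{lowerharm}; Lemma \ref{unboundedharm} and Corollary \ref{distdiamlem} in place of Corollary \ref{BNcor}; and an exterior Koebe distortion estimate in place of \eqref{Koebe}. Composing with an automorphism of $\C\setminus\overline{\DD}$, we may assume $\Phi(\infty) = \infty$, so that $\Phi(z) = cz + a_0 + O(1/z)$ with $c = \cp(K) \in [\diam K/4, \diam K/2]$.

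The exterior Koebe estimate takes the form
\[
\tfrac{1}{4}(R-1)\abs{\Phi'(z)} \le \dist(\zeta, K) \le 4(R-1)\abs{\Phi'(z)}.
\]
Both inequalities come directly from Koebe's $1/4$ theorem: the left by applying it to $\Phi$ on the disk $D(z, R-1) \subset \C\setminus\overline{\DD}$, whose image lies in $\Omega$; the right by applying it to $\Phi^{-1}$ on $B(\zeta, \rho) \subset \Omega$, whose image lies in $\C\setminus\overline{\DD}$ (and hence must be disjoint from $\overline{\DD}$, forcing $R - 1 \ge \rho/(4\abs{\Phi'(z)})$). In the large-$R$ regime, the area theorem $\sum n \abs{a_n}^2 \le c^2$ combined with Cauchy--Schwarz gives $\abs{\Phi'(z) - c} \le c/(R(R^2-1))$.

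With these tools, \eqref{extderest1} for $1 < R < e^{\pi/4}$ follows from \eqref{lowerharm1e} and \eqref{distGamma}, which give $\dist(\zeta, \Gamma_j) \le 4\csc^2(1/120)\,\dist(\zeta,K)$ for $j=1,4$, whence $\dist(\Gamma_1, \Gamma_4) \le 8\csc^2(1/120)\,\dist(\zeta,K)$ by the triangle inequality; combining with the upper Koebe bound on $\dist(\zeta,K)$ and the concavity estimate $R - 1 \le C\log R$ parallel to \eqref{logbounds1a} yields the desired constant. For $R \ge e^{\pi/4}$, the area-theorem bound yields $\abs{\Phi'(z)} \ge c(1 - 1/(R(R^2-1))) > \diam K/6$. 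Symmetrically, \eqref{extderest2} for $1 < R < e^{2\pi}$ follows from Corollary \ref{distdiamlem} and \eqref{lowerharm2e}, which give $\min(\diam\Gamma_2, \diam\Gamma_3) \ge (32R)^{-1}\tan^2(1/256)\,\dist(\zeta,\Gamma_j) \ge (32R)^{-1}\tan^2(1/256)\,\dist(\zeta,K)$, combined with the lower Koebe bound and the uniform estimate $R\log R/(R-1) \le 7$ on this range. For $R \ge e^{2\pi}$, the area theorem gives $\abs{\Phi'(z)} \le c + c/(R(R^2-1)) < 2c \le \diam K$.

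The main hurdle is bookkeeping the constants: each of the harmonic measure, Koebe, and concavity estimates contributes a factor, and their product must be shown to be dominated by $6\cdot 10^5$ and $5\cdot 10^9$ respectively. The area-theorem step, while classical, requires a careful derivation via Cauchy--Schwarz applied to the Laurent expansion $\Phi'(z) - c = -\sum_{n\ge 1} n a_n/z^{n+1}$.
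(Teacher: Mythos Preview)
Your outline matches the paper's proof closely: the same harmonic-measure inputs, the same Koebe-type two-sided control of $\rho=\dist(\zeta,K)$, and a classical distortion bound for the large-$R$ regime. Two quantitative slips are worth fixing.

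First, the Cauchy--Schwarz estimate from the area theorem gives
\[
\abs{\Phi'(z)-c}\le \Bigl(\sum_{n\ge1}n\abs{a_n}^2\Bigr)^{1/2}\Bigl(\sum_{n\ge1}\frac{n}{R^{2n+2}}\Bigr)^{1/2}\le \frac{c}{R^2-1},
\]
not $c/(R(R^2-1))$. This weaker bound still yields both large-$R$ cases of \eqref{extderest1}--\eqref{extderest2}, so the conclusion survives; the paper simply quotes Loewner's distortion inequality \eqref{Sigmadist}, which is the sharp form of the same estimate.

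Second, your upper bound $\rho\le 4(R-1)\abs{\Phi'(z)}$ from Koebe applied to $\Phi^{-1}$ is slightly weaker than what the paper obtains. The paper applies the Schwarz--Pick lemma to $1/\Phi^{-1}$ on $D(\zeta,\rho)$, which yields the tighter inequality $\rho\le (R^2-1)\abs{\Phi'(z)}$; combined with $\log R/(R^2-1)>1/5$ on $(1,e^{\pi/4})$ this gives $\abs{\Phi'(z)}\ge\rho/(5\log R)$, and then $8\csc^2(1/120)\cdot 5<600000$. With your Koebe version one has only $\abs{\Phi'(z)}\ge\rho/(4(R-1))$, and since $(R-1)/\log R$ reaches about $1.52$ at $R=e^{\pi/4}$, the resulting constant in the first line of \eqref{extderest1} comes out near $7.3\cdot10^5$, overshooting the stated $6\cdot10^5$. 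Replacing the Koebe step by the Schwarz--Pick argument closes this gap with no other changes.
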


\begin{proof} The conformal map $\Phi$  has the asymptotic behavior $\Phi(z) = c/z + O(1)$ as $z\to\infty$, where $|c|$ is the logarithmic capacity of $K$, denoted $\cp K$. For a compact connected set $K$, the logarithmic capacity is comparable to diameter: 
\begin{equation}\label{capdiam}
2\cp K\le \diam K\le 4\cp K,
\end{equation} 
see~\cite[\S 11.1]{Pomu}. A distortion theorem due to Loewner (see section IV.3 in~\cite{Golb} or Corollary~3.3 in~\cite{Pomu}) states that a univalent function $F\colon \C\setminus\overline{\DD}\to\C$, normalized by $F(z)/z\to 1$ as $z\to\infty$, satisfies
\begin{equation}\label{Sigmadist}
1-\frac{1}{\abs{z}^2} \le \abs{F'(z)}\le \frac{1}{1-1/\abs{z}^2}.
\end{equation}
Combining~\eqref{capdiam} with~\eqref{Sigmadist} yields
\[
\frac14 \diam  K\left(1-\frac{1}{\abs{z}^2}\right)
\le  \abs{\Phi'(z)} \le  \frac12 \frac{\diam K}{1-1/\abs{z}^2}
\]
which takes care of the second half of~\eqref{extderest1} and~\eqref{extderest2}.

Our next step is to prove the following distortion bounds, where $R = \abs{z}$ and $\rho = \dist(\Phi(z), K)$:
\begin{equation}\label{derdistext1}
\abs{\Phi'(z)} \ge \frac{\rho}{5 \log R} ,\quad  1 < R < e^{\pi/4};
\end{equation}
\begin{equation}\label{derdistext2}
\abs{\Phi'(z)} \le \frac{4 \rho}{\log R} ,\quad   R > 1. 
\end{equation}
Indeed, $(R^2-1)\abs{\Phi'(z)} \ge \rho$ is a consequence of the Schwarz-Pick lemma applied to $1/\Phi^{-1}$ in the disk $D(\Phi(z), \rho)$. The function $\log R/(R^2-1)$ is decreasing on the interval $(1, e^{\pi/4})$, hence is bounded below by its value at $e^{\pi/4}$, which is greater than $1/5$. The inequality~\eqref{derdistext1} follows.  To prove~\eqref{derdistext2}, apply the Koebe $1/4$ theorem to $\Phi$ in the disk $D(z, R-1)$. It yields 
$4\rho \ge (R-1)\abs{\Phi'(z)}\ge (\log R)\abs{\Phi'(z)}$ as claimed.   

\textit{Proof of ~\eqref{extderest1} for $1<R<e^{\pi/4}$.} 
From ~\eqref{distGamma} and ~\eqref{lowerharm1e} it follows that
\[
\dist(\Gamma_1, \Gamma_4) \le 
\dist(\zeta, \Gamma_1) + \dist(\zeta, \Gamma_4) \le 
8\csc^2 \left(\frac{1}{120}\right)\rho  \le 120000 \rho
\]
which in view of~\eqref{derdistext1}  implies 
\[
\abs{\Phi'(z)} \ge \frac{\rho}{5 \log R} \ge 
\frac{\dist(\Gamma_1, \Gamma_4)}{600000\log R}
\]
proving~\eqref{extderest1}.

\textit{Proof of ~\eqref{extderest2} for $1<R<e^{2\pi}$.} 
It follows from ~\eqref{diamGammaSimple} and ~\eqref{lowerharm2e} that
\[
\min(\diam \Gamma_2, \diam \Gamma_3) \ge  \frac{1}{32e^{2\pi}} \tan^2 \left(\frac{1}{256}\right)\rho 
\]
which in view of ~\eqref{derdistext2} implies  
\[
\abs{\Phi'(z)} \le \frac{4\rho}{\log R} \le 
5\cdot 10^9 \frac{\min(\diam \Gamma_2, \diam \Gamma_3)}{\log R}
\]
as claimed. 
\end{proof}

\section{Bi-Lipschitz extension of a centrally symmetric map}\label{globalsec}

\begin{proof}[Proof of Theorem~\ref{mainthm}] 
It suffices to work with a sense-preserving map $f\colon \T\to\C$. 
Our goal is to produce an extension $F\colon \C\to \C$ with the derivative bounds 
\begin{equation}\label{extbounds}
\norm{DF}\le 10^{27} L \quad \text{and} \quad  \norm{DF^{-1}} \le 10^{23} / \ell. 
\end{equation}
Indeed, the desired Lipschitz properties of both $F$ and $F^{-1}$ follow by integration along line segments.  Theorem~\ref{mainthmInt} provides an extension that satisfies~\eqref{extbounds} in $\DD\setminus\{0\}$. It remains to do the same in the exterior domain $\C\setminus\overline{\DD}$.  

Let $\Omega_e$ be the unbounded domain with the boundary $f(\T)$, and let $\Phi$ a conformal map of $\C\setminus\overline{\DD}$ onto $\Omega_e$. As in the proof of Theorem~\ref{mainthmInt}, we consider  
the induced boundary homeomorphism $\phi\colon \T\to f(\T)$ and define $\psi\colon \T\to\T$ by $\psi = f^{-1}\circ \phi$. Lemma~\ref{homeodist} provides an extension $\Psi\colon \DD\to\DD$ of $\psi$. Let 
$F = \Phi\circ r\circ \Psi^{-1}\circ r$ where $r(z) = 1/\bar z$ is the reflection in $\T$. It is easy to see that $F$ extends $f$. 

For $\zeta\in\C\setminus\overline{\DD} $ let $z = \Psi^{-1}(r(\zeta))$. By the chain rule, 
\[
\norm{DF(\zeta)} = \frac{\abs{\Phi'(r(z))} \norm{D\Psi(z)^{-1}}}{\abs{\zeta}^2\abs{z}^2} \quad 
\text{and} \quad 
\norm{DF(\zeta)^{-1}} = \abs{\zeta}^2\abs{z}^2\frac{\norm{D\Psi(z)}}{\abs{\Phi'(r(z))}}.
\]
According to~\eqref{modext},
\begin{equation}\label{modext2}
e^{-8\pi} \le \abs{\zeta}^2\abs{z}^2\le e^{8\pi}. 
\end{equation}
The claimed estimate for $\norm{DF(\zeta)^{-1}}$ for $1<\abs{\zeta}<e^{\pi/4}$  follows from the inequalities~\eqref{BAderest1}, ~\eqref{BLpropused},  ~\eqref{extderest1}, and~\eqref{modext2}:
\[
\norm{DF(\zeta)^{-1}} \le 600000 e^{12\pi}\pi /\ell \le 10^{23}/\ell.
\]
When $\abs{\zeta}\ge e^{\pi/4}$, we do not need ~\eqref{BLpropused} but use ~\eqref{twodisks} to obtain $\diam \partial\Omega \ge 2\ell$, which is used in~\eqref{extderest1}. Hence 
\[
\norm{DF(\zeta)^{-1}} \le 180e^{12\pi} /\ell  \le 10^{23}/\ell.
\]
Next, to estimate $\norm{DF(\zeta)}$ for $1<\abs{\zeta}<e^{2\pi}$ we use ~\eqref{BAderest2}, ~\eqref{BLpropused},  ~\eqref{extderest2}, and~\eqref{modext2}:
\[
\norm{DF(\zeta)} \le 5\cdot 10^9\cdot 4  e^{12\pi} L \le 10^{27}L. 
\]
The case $\abs{\zeta}\ge e^{2\pi}$ involves~\eqref{twodisks}, according to which $\diam \partial\Omega \le 2L$. Hence the combination of ~\eqref{BAderest2}, ~\eqref{modext2} and ~\eqref{extderest2} yields 
\[
\norm{DF(\zeta)} \le 2L  e^{8\pi}\cdot  16 e^{4\pi} \le 10^{27}L 
\]
completing the proof of Theorem~\ref{mainthm}. 
\end{proof}

\section{Symmetrization of a bi-Lipschitz embedding}\label{symmsec}

The \textit{winding map} $W\colon\C\to\C$ is defined in polar coordinates as $W(re^{i\theta}) = re^{2i\theta}$. 

\begin{definition}
Consider a homeomorphism $f\colon \T\to\C\setminus \{0\}$ such that $f(\T)$ separates $0$ from $\infty$. The \textit{winding symmetrization} of $f$ is a homeomorphism $g\colon \T\to\C\setminus \{0\}$ such that
\begin{equation}\label{fWWg}
f\circ W = W\circ g.
\end{equation}
\end{definition}

It is easy to see that $g$ is determined up to the sign, since $-g$ also satisfies~\eqref{fWWg}. 

To show the existence of $g$, observe that the winding number of $f$ about $0$ is $\pm 1$, which implies that the multivalued argument function 
$\arg f(e^{2it})$ increases by $\pm 4\pi$ as $t$ increases from $0$ to $2\pi$. Hence, we can define $g$ by  
\begin{equation}\label{symmhomeo}
g(e^{it}) = \exp\left(\frac{i}{2} \arg f(e^{2it})\right )
\end{equation}
Note that $g(-z)=-g(z)$ by construction, hence $g(-z)\ne g(z)$. This implies $g$ is injective, because if $z,\zeta \in \T$ are such that $\zeta \ne \pm z$, then 
\[ W(g(z)) = f(W(z))\ne f(W(\zeta)) = W(g(\zeta)).\]
The goal of this section is to determine what happens to the upper and lower Lipschitz constants of $f$ under symmetrization. 

The following example illustrates that there is an issue with the lower Lipschitz bound for $g$. 

\begin{example}\label{badcircle} Let $f(z) = z + 0.9$, which is obviously an isometry. Its symmetrization yields a map $g\colon\T\to\C$ such that $g(\pm i) = \pm 0.1$; thus, the lower Lipschitz constant of $g$ is at most $1/10$. The curve $g(\T)$ is shown in Figure~\ref{Symmetrizationisometry}.
\end{example}
 
\begin{figure}[ht]
\centering 
\includegraphics[width=0.5\textwidth]{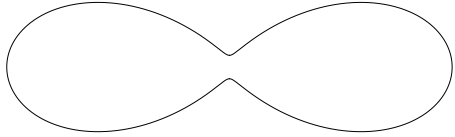}
\caption{Non-isometric symmetrization of isometry} \label{Symmetrizationisometry}
\end{figure} 

Figure~\ref{Symmetrizationisometry} also demonstrates that convexity may be lost in the process of winding symmetrization, and thus clarifies the difference between winding symmetrization  and \textit{central symmetrization}~\cite[p.~101]{Egg} which transforms closed convex curves into centrally symmetric closed convex curves. 

The issue with Example~\ref{badcircle} is that the curve $f(\T)$ is too close to $0$. This distance can be controlled with the following lemma, which is well-known but is proved here for completeness. 

\begin{lemma}\label{inradius} Let $f\colon \T\to\C$ be an $(L,\ell)$-bi-Lipschitz embedding. Denote by $R_I$ the inradius of the domain $\Omega$ bounded by $f(\T)$, that is the largest radius of a disk contained in $\Omega$. Then 
\begin{equation}\label{inradius1}
\ell\le R_I \le L.
\end{equation}
\end{lemma}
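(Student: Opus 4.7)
\textit{Proof plan.}

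\textbf{Upper bound $R_I\le L$.} For all $z,w\in\T$ the upper Lipschitz bound gives $|f(z)-f(w)|\le L|z-w|\le 2L$, so $\diam f(\T)\le 2L$. Since $\overline{\Omega}$ is a planar Jordan domain with boundary $f(\T)$, its diameter coincides with that of its boundary curve, so $\diam\overline{\Omega}\le 2L$. Any inscribed closed disk $\overline{D(p,R_I)}\subset\overline{\Omega}$ has diameter $2R_I\le\diam\overline{\Omega}\le 2L$, yielding $R_I\le L$.

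\textbf{Lower bound $R_I\ge\ell$.} The lower Lipschitz bound gives $|f(z)-f(-z)|\ge 2\ell$ for every $z\in\T$, so the map
\[
\phi(z)=\frac{f(z)-f(-z)}{|f(z)-f(-z)|}\colon\T\to\T
\]
is continuous and satisfies $\phi(-z)=-\phi(z)$. A Borsuk--Ulam style degree argument shows that $\phi$ has odd degree and is therefore surjective: for every unit direction $v\in\T$ there is some $z_v\in\T$ with $f(z_v)-f(-z_v)$ a positive multiple of $v$ of length at least $2\ell$. Hence $f(\T)$ possesses a chord of length $\ge 2\ell$ in every direction, and in particular $\mathrm{conv}(f(\T))$ has width $\ge 2\ell$ in every direction. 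I would combine this with the homotopy $H_s(z)=(1-s)f(z)+sf(-z)$, $s\in[0,1]$, which interpolates between $f$ and its orientation reversal: the winding number around any $p\in\Omega$ flips from $+1$ to $-1$ as $s$ runs from $0$ to $1$, so each $p\in\Omega$ must lie on some antipodal chord $[f(z),f(-z)]$ of length $\ge 2\ell$. Translating $f$ so that the midpoint of a suitably chosen such chord sits at the origin, the plan is to conclude $|f(z)|\ge\ell$ for all $z\in\T$ and $0\in\Omega$, which gives $\overline{D(0,\ell)}\subset\overline{\Omega}$.

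\textbf{Main obstacle.} The delicate step is passing from the convex-hull width estimate to an inradius estimate for $\Omega$ itself, since $\Omega$ may be strictly smaller than $\mathrm{conv}(f(\T))$. The cleanest route I see is to take the translation point to be the center $p_0$ of a maximally inscribed disk in $\Omega$ and consider its touching parameters $z_1,\dots,z_k\in\T$ (points with $f(z_j)\in\partial D(p_0,R_I)\cap f(\T)$, necessarily surrounding $p_0$ in the convex sense by maximality). If any touching pair is parameter-antipodal, the conclusion is immediate from $|f(z)-f(-z)|\ge 2\ell$. Otherwise $|f(z_i)-f(z_j)|\le 2R_I$ together with the lower Lipschitz bound forces $|z_i-z_j|\le 2R_I/\ell$, so if $R_I<\ell$ then all touching parameters are confined to a strict arc of $\T$; combining this confinement with the requirement that the $f(z_i)$ angularly surround $p_0$ and with the bi-Lipschitz image of the complementary arc should yield a contradiction with the maximality of $R_I$.
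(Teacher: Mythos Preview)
Your upper bound argument is correct and more elementary than the paper's. The paper instead extends $f$ via Kirszbraun to an $L$-Lipschitz map $F\colon\C\to\C$, notes that $\Omega\subset F(\DD)$ by a degree argument, and concludes that every point of $\Omega$ lies within distance $L$ of $F(\T)=\partial\Omega$. Your diameter argument avoids Kirszbraun entirely.

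The lower bound, however, has a real gap, which you have partly identified yourself. The homotopy argument correctly shows that every $p\in\Omega$ lies on some antipodal chord $[f(z),f(-z)]$ of length $\ge 2\ell$, but this chord need not stay in $\Omega$, so no disk of radius $\ell$ around $p$ is produced; the conclusion ``$|f(z)|\ge\ell$ for all $z$'' after translating one chord's midpoint to the origin simply does not follow. In your touching-point fallback, the confinement of the $z_i$ to a proper arc is correct but nearly vacuous when $2R_I/\ell$ is close to $2$ (the arc can have length close to $2\pi$), and you give no mechanism by which ``the bi-Lipschitz image of the complementary arc'' contradicts maximality of $R_I$: the touching points $f(z_i)$ already surround $p_0$, so the disk cannot be pushed, regardless of what the complementary arc does. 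Even if this line could be completed, it is hard to see how it would recover the sharp constant $\ell$ rather than some $c\ell$ with $c<1$.

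The paper's proof of $R_I\ge\ell$ is the exact dual of its upper-bound argument and sidesteps all of this. Extend $f^{-1}\colon f(\T)\to\T$ to an $\ell^{-1}$-Lipschitz map $G\colon\C\to\C$ by Kirszbraun. A degree argument gives $\DD\subset G(\overline{\Omega})$. Every $p\in\DD$ is then $G(q)$ for some $q\in\overline{\Omega}$; choosing $q'\in\partial\Omega$ with $|q-q'|\le R_I$ yields $\dist(p,\T)\le|G(q)-G(q')|\le\ell^{-1}R_I$. Taking $p=0$ gives $1\le\ell^{-1}R_I$.
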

\begin{proof} By the Kirszbraun theorem~\cite[Theorem 1.34]{BB}, $f$ extends to an $L$-Lipschitz map $F\colon \C\to \C$. This extension need not be a homeomorphism, but we still have $\Omega\subset F(\DD)$ because $F_{\T} = f$ has nonzero degree with respect to each point of $\Omega$. It follows that every point of $\Omega$ is within distance $L$ of $F(\T) = \partial\Omega$, which means $R_I\le L$.  

Similarly, extending $f^{-1}$ to an $\ell^{-1}$-Lipschitz map $G\colon\C\to\C$ we find that the inradius of $G(\Omega)$ is at most $\ell^{-1}R_I$. Since $G(\Omega)\supset \DD$, the lower bound $R_I\ge \ell$ follows.
\end{proof}
 
\begin{proposition}\label{inradbound} Let $f\colon \T\to\C\setminus\{0\}$ be an $(L,\ell)$-bi-Lipschitz embedding. Define $r = \min_{\T}|f| > 0$. Then the symmetrized embedding $g$, defined by~\eqref{fWWg}, is $(\pi L, r\ell/(2\pi L))$-bi-Lipschitz. 
\end{proposition}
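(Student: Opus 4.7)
The plan is to parametrize by $t\in\R$ with $z=e^{it}$ and introduce $G(t)=g(e^{it})$ and $\gamma(t)=f(e^{2it})$. The defining relation $f\circ W=W\circ g$ becomes $\gamma(t)=G(t)^2/|G(t)|$, which forces $|G(t)|=|\gamma(t)|\ge r>0$. Because $\gamma$ stays away from the origin, a continuous branch of $\arg\gamma$ exists; writing $\gamma=\rho e^{i\psi}$ with $\rho,\psi$ Lipschitz (the latter via $\psi'=\im(\gamma'/\gamma)$, whose absolute value is at most $|\gamma'|/r$), we obtain $G=\rho e^{i\psi/2}$, which is therefore Lipschitz and differentiable almost everywhere.

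For the upper Lipschitz bound, differentiation of the polar representations gives, almost everywhere,
\[
|G'|^2=(\rho')^2+\tfrac14\rho^2(\psi')^2 \ \le\ (\rho')^2+\rho^2(\psi')^2 = |\gamma'|^2,
\]
and $|\gamma'|\le 2L$ a.e.\ because $f$ is $L$-Lipschitz on $\T$. Integration yields $|G(t_1)-G(t_2)|\le 2L|t_1-t_2|$, and converting arc length to chord length via $|e^{it_1}-e^{it_2}|\ge (2/\pi)|t_1-t_2|$ on $|t_1-t_2|\le\pi$ produces $|g(z_1)-g(z_2)|\le \pi L|z_1-z_2|$.

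For the lower Lipschitz bound I will use a two-case argument. A short polar-coordinate computation shows that the winding map $W$ is globally $2$-Lipschitz on $\C$, so
\[
|g(z_1)-g(z_2)|\ \ge\ \tfrac12|W(g(z_1))-W(g(z_2))|\ =\ \tfrac12|f(z_1^2)-f(z_2^2)|\ \ge\ \tfrac{\ell}{2}|z_1+z_2|\,|z_1-z_2|.
\]
This already delivers the claim whenever $|z_1+z_2|\ge r/(\pi L)$.

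The main obstacle is the ``antipodal'' case $|z_1+z_2|<r/(\pi L)$: then $z_2$ is close to $-z_1$, $|f(z_1^2)-f(z_2^2)|$ is too small, and the inequality above is useless. To resolve it I will invoke central symmetry $g(-z_1)=-g(z_1)$ and apply the upper bound just established to $z_2$ and $-z_1$: $|g(z_1)+g(z_2)|=|g(z_2)-g(-z_1)|\le \pi L|z_1+z_2|<r$. The reverse triangle inequality then gives $|g(z_1)-g(z_2)|\ge 2|g(z_1)|-|g(z_1)+g(z_2)|\ge 2r-r=r$, and combined with $|z_1-z_2|\le 2$ this yields $|g(z_1)-g(z_2)|\ge (r/2)|z_1-z_2|\ge (r\ell/(2\pi L))|z_1-z_2|$ (using $\ell\le L$). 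The threshold $r/(\pi L)$ is chosen precisely so the two cases deliver the same constant.
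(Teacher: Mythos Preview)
Your proof is correct and follows essentially the same approach as the paper: a local derivative bound (via the singular values of $DW$, which you compute explicitly in polar coordinates) gives the upper constant $\pi L$, and the lower bound is handled by the same two-case split into near-antipodal points (central symmetry plus reverse triangle inequality) and far-from-antipodal points ($W$ is $2$-Lipschitz, then lower bi-Lipschitz bound on $f$). The only cosmetic difference is that you phrase the threshold via $|z_1+z_2|$ and the identity $|z_1^2-z_2^2|=|z_1+z_2|\,|z_1-z_2|$, whereas the paper uses the arc-length threshold $\rho_{\T}(z,\zeta)\ge \pi - r/(2L)$ and the estimate $\sin x\ge 2x/\pi$; these are equivalent since $|z_1+z_2|=2\cos(\rho_{\T}(z_1,z_2)/2)$ on $\T$.
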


\begin{proof} Outside of $0$, the map $W$ is differentiable and its derivative matrix  has singular values $2$ and $1$. Hence $W$ is $2$-Lipschitz and locally invertible, with the inverse being $1$-Lipschitz. If the homeomorphism $f$ is $L$-Lipschitz, then its symmetrization $g$, which can be locally defined by $W^{-1}\circ f\circ W$, is locally $2L$-Lipschitz on $\T$.  It follows that $g$ is $2L$-Lipschitz with respect to the path metric on $\T$: 
\begin{equation}\label{pathL1}
\abs{g(z)-g(\zeta)}\le 2L\, \rho_{\T}(z,\zeta)
\end{equation}
where $\rho_{\T}(z,\zeta)$ is the infimum of lengths of curves joining $z$ to $\zeta$  and contained in $\T$. 
Since any two points $z,\zeta\in \T$ are joined by an arc of length at most $(\pi/2)\abs{z-\zeta}$, we have $\rho_{\T}(z,\zeta)\le (\pi/2)\abs{z-\zeta}$, hence
\[
\abs{g(z)-g(\zeta)}\le \pi L\abs{z-\zeta}.
\]
To prove the lower Lipschitz bound, fix $\zeta\in\T$. Note that $\abs{g(\zeta)}\ge r$ since $W$ preserves the absolute value. Consider two cases: 

\textit{Case 1.} $\rho_{\T}(z,\zeta)\ge \pi - r/(2L)$. Then $\rho_{\T}(-z, \zeta)\le r/(2L)$, which by inequality~\eqref{pathL1} implies $\abs{g(-z)-g(\zeta)} \le r$. Using the relation $g(-z) = -g(z)$ we obtain
\[
\abs{g(z)-g(\zeta)} = \abs{2g(\zeta) + g(-z)-g(\zeta)}
\ge \abs{2g(\zeta)} - \abs{g(-z)-g(\zeta)} \ge r \ge \frac{r}{2}\abs{z-\zeta}.
\]

\textit{Case 2.} $\rho_{\T}(z,\zeta) < \pi - r/(2L)$. An elementary geometric argument shows that the restriction of $W$ to an arc of $\T$ of size $\beta<\pi$ has lower Lipschitz constant $2\cos(\beta/2)$ with respect to the Euclidean metric. Therefore, 
\[
\abs{f(W(z))-f(W(\zeta))} \ge 2\ell\cos\left(\frac{\pi}{2} - \frac{r}{4L}\right) \abs{z-\zeta} = 2\ell\sin\left(\frac{r}{4L}\right)\abs{z-\zeta}.
\]
Since $f\circ W = W\circ g$ and $W$ is $2$-Lipschitz, it follows that 
\[
\abs{g(z)-g(\zeta)} \ge \frac12 \abs{f(W(z))-f(W(\zeta))} 
\ge \ell\sin\left(\frac{r}{4L}\right)\abs{z-\zeta}.
\]
The estimate $\sin x\ge 2x/\pi$, $0<x<\pi/2$, completes the proof. 
\end{proof}

\begin{remark}
The first part of the proof of Proposition~\ref{inradbound} can also be applied to $g^{-1}$, showing that $g^{-1} = W^{-1}\circ f^{-1}\circ W$ is $2L$-Lipschitz with respect to the path metric on $g(\T)$. However, this does not yield a bound on the Lipschitz constant of $g^{-1}$ in the Euclidean metric, since the shape of $g(\T)$ is unknown. 
\end{remark}

Combining Proposition~\ref{inradbound} with Lemma~\ref{inradius} we arrive at the following result.

\begin{corollary}\label{windBL} For every $(L,\ell)$-bi-Lipschitz embedding $f\colon \T\to\C$ there exists a point $w_0\in\C$ such that the winding symmetrization of $f-w_0$ is a $(\pi L, \ell^2/(2\pi L))$ bi-Lipschitz map.
\end{corollary}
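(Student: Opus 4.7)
The plan is to choose $w_0$ as the center of a maximal disk inscribed in the Jordan domain bounded by $f(\T)$, and then apply Proposition~\ref{inradbound} to the translated embedding $\tilde f = f - w_0$.

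First I would denote by $\Omega$ the bounded Jordan domain with $\partial\Omega = f(\T)$ and let $R_I$ be its inradius. By Lemma~\ref{inradius}, $R_I \ge \ell$. Pick $w_0 \in \Omega$ such that $D(w_0, R_I) \subset \Omega$; then the translate $\tilde f = f - w_0$ is still $(L,\ell)$-bi-Lipschitz, bounds the Jordan domain $\Omega - w_0$, and satisfies $0 \in \Omega - w_0$, so in particular $\tilde f(\T)$ separates $0$ from $\infty$. This makes Proposition~\ref{inradbound} applicable to $\tilde f$.

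Next I would estimate $r := \min_{z\in\T}|\tilde f(z)|$. Since $r = \dist(0,\tilde f(\T)) = \dist(w_0, f(\T))$ and the disk $D(w_0,R_I)$ lies in $\Omega$, we have $r \ge R_I \ge \ell$. Plugging this into Proposition~\ref{inradbound} yields that the winding symmetrization $g$ of $\tilde f$ is $(\pi L, r\ell/(2\pi L))$-bi-Lipschitz, hence $(\pi L, \ell^2/(2\pi L))$-bi-Lipschitz because $r \ge \ell$ forces $r\ell/(2\pi L) \ge \ell^2/(2\pi L)$. This gives exactly the claimed constants.

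There is no real obstacle here; the only subtlety is making sure the hypotheses of Proposition~\ref{inradbound} hold for $\tilde f$ (it avoids $0$, and $\tilde f(\T)$ separates $0$ from $\infty$), both of which are immediate from the choice of $w_0$ as the center of an inscribed disk. The rest is a one-line combination of the two quoted results.
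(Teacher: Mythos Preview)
Your proposal is correct and follows exactly the paper's intended argument: choose $w_0$ as an incenter of the domain bounded by $f(\T)$, use Lemma~\ref{inradius} to get $r\ge R_I\ge \ell$, and plug this into Proposition~\ref{inradbound}. The verification of the hypotheses you include is accurate and nothing more is needed.
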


The point $w_0$ can be taken to be an incenter of the domain bounded by $f(\T)$. 

\section{Conclusion}\label{concludesec}

\begin{proof}[Proof of Theorem~\ref{mainthmgen}] Given an $(L,\ell)$-bi-Lipschitz embedding $f\colon \T\to\C$, let $g$ be the winding symmetrization of $f-w_0$ as in Corollary~\ref{windBL}. Theorem~\ref{mainthmgen} provides 
its bi-Lipschitz extension $G\colon \C\to \C$ which is also centrally symmetric. Therefore there exists $F\colon \C\to\C$ such that $F\circ W = W\circ G$. Since the singular values of the derivative matrix $DW$ are $1$ and $2$, it follows that $\sup \norm{DF} \le 2\sup\norm{DG}$ and $\sup \norm{DF^{-1}} \le 2\sup\norm{DG^{-1}}$. Recalling the Lipschitz bounds of  Corollary~\ref{windBL} and Theorem~\ref{mainthm}, we arrive at 
\[
\norm{DF} \le 2\pi\cdot 10^{27}L \le 10^{28}L
\]
and
\[\norm{DF^{-1}}^{-1}\ge \frac12\cdot  10^{-23}\frac{\ell^2}{2\pi L} \ge 10^{-25}\frac{\ell^2}{L}.
\]
One of the maps $F$ and $-F$ provides the desired extension of $f$. 
\end{proof}

The source of   nonlinearity in Theorem~\ref{mainthmgen} is the symmetrization process of section~\ref{symmsec}.  
It is thus natural to seek an form of Corollary~\ref{windBL} with a linear bound for the lower Lipschitz constant. The following example shows that such an improvement will require a better way of choosing the center point $w_0$ for symmetrization. 

\begin{example} Let $f\colon \T\to\C$ be the map described by Figure~\ref{BeforeFigure}, where $f(a)=A$, $f(b)=B$ and both boundary curves $AB$ and $BA$ are traced counterclockwise with constant speed. The Euclidean distances $\abs{a-b}$ and $\abs{A-B}$ are equal to a small parameter $\epsilon$. The speed at which $AB$ is traced is about $1/\epsilon$, while the speed of $BA$ is about $1$. One can see that $f$ is bi-Lipschitz with respect to the Euclidean metric with constants approximately $(1/\epsilon, 1)$. 
\begin{figure}[ht]
\centering 
\includegraphics[width=0.5\textwidth]{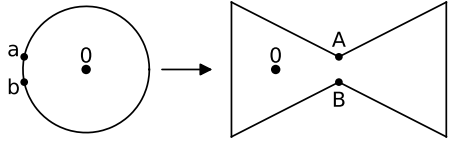}
\caption{Bi-Lipschitz constants about $(1/\epsilon, 1)$} \label{BeforeFigure}
\end{figure} 

The map obtained after winding symmetrization is shown on Figure~\ref{AfterFigure}.  Both distances $\abs{A_1-B_1}$ and $\abs{A_2-B_2}$ are of order $\epsilon$. However, $\abs{a_1-b_1}$ and $\abs{a_2-b_2}$ are approximately $2$. Thus, the lower Lipschitz constant of the symmetrized map decays with $\epsilon$. The ratio of upper and lower Lipschitz constants gets squared in the process of winding  symmetrization, as is does in Corollary~\ref{windBL}.
\begin{figure}[ht]
\centering 
\includegraphics[width=0.95\textwidth]{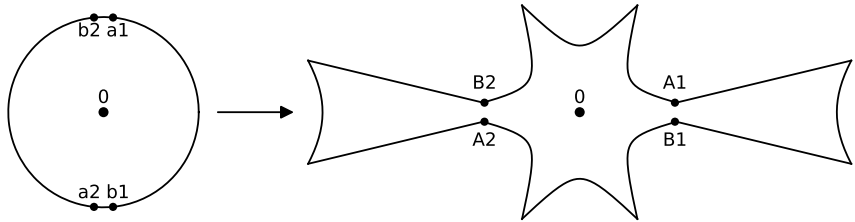}
\caption{Bi-Lipschitz constants about $(1/\epsilon, \epsilon)$} \label{AfterFigure}
\end{figure} 
\end{example}

However, if the map on Figure~\ref{BeforeFigure} was translated so that $0$ is in the center of the right half of the bowtie, the winding symmetrization would not incur a nonlinear growth of distortion. This motivates the following question.

\begin{question}\label{optimistic} Is there a universal constant $C$ such that for every $(L,\ell)$-bi-Lipschitz embedding $f\colon \T\to\C$ there exists a point $w_0\in\C$ such that the winding symmetrization of $f-w_0$ is a $(C L, \ell/C)$ bi-Lipschitz map? 
\end{question}

A positive answer to Question~\ref{optimistic} would provide linear distortion bounds in Theorem~\ref{mainthmgen}, thus answering the question of Daneri and Pratelli~\cite{DP}.

\bibliographystyle{amsplain}

\end{document}